\title{When is the Bloch--Okounkov $q$-bracket~modular?}
\date{\today}
\author{Jan-Willem M. van Ittersum%
\thanks{\emph{Email}: \href{mailto:j.w.m.vanittersum@uu.nl}{j.w.m.vanittersum@uu.nl}, \newline
Mathematisch Instituut, Universiteit Utrecht, Postbus 80.010, 3508 TA Utrecht, The Netherlands, \newline
Max-Planck-Institut f\"ur Mathematik, Vivatsgasse 7, 53111 Bonn, Germany
}}
\newcommand{\n}{\mathbb{N}}
\newcommand{\z}{\mathbb{Z}}
\newcommand{\q}{\mathbb{Q}}
\renewcommand{\c}{\mathbb{C}}
\newcommand{\pdv}[2]{\frac{\partial #1}{\partial #2}}
\newcommand{\tLambda}{\tilde\Lambda}
\newcommand{\tH}{\tilde{\mathcal{H}}}
\newcommand{\tR}{\tilde{\mathcal{R}}}
\newcommand{\sltwo}{\mathfrak{sl}_2}
\newcommand{\projection}{\mathrm{pr}}
\newcommand{\Kelvin}{K}
\renewcommand{\vec}{\underline}
\renewcommand{\phi}{\varphi}
\DeclareMathOperator{\sgn}{sgn}
\theoremstyle{plain} 
\newtheorem{theorem}{Theorem}
\newtheorem{lemma}[theorem]{Lemma} 
\newtheorem{corollary}[theorem]{Corollary} 
\newtheorem{proposition}[theorem]{Proposition}
\theoremstyle{definition} 
\newtheorem{definition}[theorem]{Definition}
\newtheorem{example}[theorem]{Example}
\theoremstyle{remark}
\newenvironment{remark}
  {\pushQED{\qed}\remarkx}
  {\popQED\endremarkx}
\begin{document} 
\maketitle
\begin{abstract}
We obtain a condition describing when the quasimodular forms given by the Bloch--Okounkov theorem as $q$-brackets of certain functions on partitions are actually modular. This condition involves the kernel of an operator $\Delta$. We describe an explicit basis for this kernel, which is very similar to the space of classical harmonic polynomials. 
%
\end{abstract}


\section{Introduction}
Given a family of quasimodular forms, the question which of its members are modular often has an interesting answer. For example, consider the family of theta series 
$$\theta_P(\tau) = \sum_{\vec{x}\in \z^r} P(\vec{x})q^{x_1^2+\ldots+x_r^2} \quad \quad \quad (q=e^{2\pi i \tau})$$
given by all homogeneous polynomials $P\in \z[x_1,\ldots, x_r]$. The quasimodular form $\theta_P$ is modular if and only if $P$ is harmonic (i.e., $P\in \ker\sum_{i=1}^r \pdv{^2}{x_i^2}$) \cite{Sch39}%
\footnote{As quasimodular forms were not yet defined, Schoeneberg only showed that $\theta_P$ is modular if $P$ is harmonic. However, for every polynomial $P$ it follows that $\theta_P$ is quasimodular by decomposing $P$ as in Formula~(\ref{eq:mainharmpol}) below.}. 
Also, for every two modular forms $f,g$ one can consider the linear combination of products of derivatives of $f$ and $g$ given by
$$\sum_{r=0}^n a_r f^{(r)} g^{(n-r)} \quad \quad (a_r\in \c).$$
This linear combination is a quasimodular form which is modular precisely if it is a multiple of the Rankin-Cohen bracket $[f,g]_n$ \cite{Ran56,Coh75}. 
In this paper, we provide a condition to decide which member of the family of quasimodular forms provided by the Bloch--Okounkov theorem is modular. Let $\mathscr{P}$ denote the set of all partitions of integers and $|\lambda|$ denotes the integer that $\lambda$ is a partition of. Given a function $f:\mathscr{P}\to \q$, define the $q$-bracket of $f$ by
$$\langle f \rangle_q := \frac{\sum_{\lambda \in \mathscr{P}} f(\lambda) q^{|\lambda|}}{\sum_{\lambda \in \mathscr{P}} q^{|\lambda|}}.$$
The celebrated Bloch--Okounkov theorem states that for a certain family of functions $f:\mathscr{P}\to \q$ (called shifted symmetric polynomials and defined in Section~\ref{sec:ssp}) the $q$-brackets $\langle f \rangle_q$ are the $q$-expansions  of quasimodular forms \cite{BO00}.

Besides being a wonderful result, the Bloch--Okounkov theorem has many application in enumerative geometry. For example, a special case of the Bloch--Okounkov theorem was discovered by Dijkgraaf and provided with a mathematically rigorous proof by Kaneko and Zagier, implying that the generating series of simple Hurwitz numbers over a torus are quasimodular \cite{Dij95,KZ95}. Also, in the computation of asymptotics of geometrical invariants, such as volumes of moduli spaces of holomorphic differentials and Siegel-Veech constants the Bloch--Okounkov theorem is applied \cite{EO01,CMZ16}. 

Zagier gave a surprisingly short and elementary proof of the Bloch--Okounkov theorem \cite{Zag16}. A corollary of his work, which we discuss in Section~\ref{sec:3}, is the following proposition:
\begin{proposition}\label{prop:sltwobo1}
There exists actions of the Lie algebra $\sltwo$ on both the algebra of shifted symmetric polynomials $\Lambda^*$ and the algebra of quasimodular forms $\widetilde{M}$ such that the $q$-bracket $\langle \cdot \rangle_q: \Lambda^*\to \widetilde{M}$ is $\sltwo$-equivariant.  
\end{proposition}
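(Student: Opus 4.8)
The plan is to produce explicit $\sltwo$-triples on $\widetilde M$ and on $\Lambda^*$ and then to check that $\langle\cdot\rangle_q$ intertwines the raising operators with one another and the lowering operators with one another; once the $\sltwo$-relations are known on both sides, intertwining of the semisimple elements --- hence full $\sltwo$-equivariance --- is automatic. On the quasimodular side I would take the classical triple from the theory of quasimodular forms: as raising operator the derivation $D=q\frac{d}{dq}$, which preserves $\widetilde M=\q[E_2,E_4,E_6]$ by Ramanujan's identities and raises the weight by $2$; as lowering operator the unique derivation $\mathfrak d$ with $\mathfrak d E_4=\mathfrak d E_6=0$ and $\mathfrak d E_2$ a nonzero constant; and as semisimple element the weight operator $W$, up to an additive constant. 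Since $D$, $\mathfrak d$ and $W$ are all derivations of the graded algebra $\widetilde M$, and a commutator of two derivations is a derivation, the relations $[W,D]=2D$, $[W,\mathfrak d]=-2\mathfrak d$, $[D,\mathfrak d]=W$ need only be checked on the three generators, which also fixes the normalisation of $\mathfrak d$. One notes in passing that $\ker\mathfrak d=M$, so the modular forms are exactly the lowest-weight vectors of this action.

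On $\Lambda^*$, with its weight grading (Section~\ref{sec:ssp}), I would take the weight operator (shifted by the same constant) as semisimple element, multiplication by the weight-$2$ generator $Q_2$ --- which equals $|\lambda|$ up to an additive constant --- as raising operator, and as lowering operator the explicit second-order differential operator $\Delta$ on $\Lambda^*$ (the operator of the abstract, described in Section~\ref{sec:3}) which plays the role of a Laplacian. From the explicit form of $\Delta$ one checks directly that multiplication by $Q_2$, the weight operator and $\Delta$ obey the $\sltwo$-relations, in close parallel with the classical triple $\bigl(\,\times|x|^2,\ \deg+\tfrac r2,\ -\tfrac14\sum_i\partial_{x_i}^2\,\bigr)$ on $\q[x_1,\dots,x_r]$ underlying Schoeneberg's theorem; in particular $\ker\Delta$ will be the analogue of the space of harmonic polynomials, and the equivariance to be proved will immediately yield that $\langle f\rangle_q$ is modular whenever $\Delta f=0$.

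For the raising operators the intertwining is elementary. Applying the quotient rule to the definition of the $q$-bracket gives
$$D\,\langle f\rangle_q=\langle Q_2\,f\rangle_q-\langle Q_2\rangle_q\,\langle f\rangle_q\qquad(f\in\Lambda^*),$$
and since $\langle Q_2\rangle_q=-\tfrac1{24}E_2$ this says that multiplication by $Q_2$ on $\Lambda^*$ corresponds, under $\langle\cdot\rangle_q$, to the operator $g\mapsto Dg-\tfrac1{24}E_2\,g$ on $\widetilde M$. (Note that one cannot hope to use $D$ itself as raising operator on the $\widetilde M$-side and pull it back to $\Lambda^*$: the quotient rule produces the term $\langle Q_2\rangle_q\langle f\rangle_q$, which is not the $q$-bracket of anything obvious --- the constant-shifted operator is precisely what cures this.) A short computation on $E_2,E_4,E_6$ shows that $g\mapsto Dg-\tfrac1{24}E_2g$, together with $W$ (shifted by a constant) and $\mathfrak d$, is again an $\sltwo$-triple. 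Granting the lowering relation discussed next, the intertwining of the semisimple elements then follows by a two-line computation from the bracket relations on the two sides.

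The genuine obstacle is the lowering relation $\langle\Delta f\rangle_q=\mathfrak d\langle f\rangle_q$. It does not reduce to the generators of $\Lambda^*$, because $\Delta$ is not a derivation and $\langle\cdot\rangle_q$ is not an algebra homomorphism; and it is not formal, because $\langle\cdot\rangle_q$ is far from injective. Here one must use the heart of Zagier's proof of the Bloch--Okounkov theorem, namely the closed-form evaluation of the $q$-bracket $\langle\,\prod_j\mathbf F(\,\cdot\,;z_j)\,\rangle_q$ of a product of generating series $\mathbf F(\lambda;z)$ for the $Q_k$ (essentially $\sum_{i\ge1}e^{(\lambda_i-i+1/2)z}$, suitably regularised), as an explicit expression built from Jacobi theta functions in the $z_j$ and $q$. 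Applying $\Delta$ on the partition side and $\mathfrak d=\mathrm{const}\cdot\partial_{E_2}$ on the quasimodular side both become recognisable operations on this explicit expression, and the real work is to show that they coincide. A clean way to package this --- essentially the route of Section~\ref{sec:3} --- is to realise both $\sltwo$-actions as induced by a single copy of $\sltwo$ acting on the auxiliary variable $z$, so that the equivariance becomes manifest from Zagier's formula.
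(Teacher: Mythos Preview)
Your plan is essentially the paper's approach: set up the explicit triples $(\hat D,\mathfrak d,\hat W)$ on $\widetilde M$ and $(Q_2,\Delta,E-\tfrac12)$ on the partition side, verify the $\sltwo$-relations from the commutator table, get the raising intertwiner from the quotient-rule identity $D\langle f\rangle_q=\langle Q_2 f\rangle_q-\langle Q_2\rangle_q\langle f\rangle_q$, and import the lowering intertwiner $\langle\Delta f\rangle_q=\mathfrak d\langle f\rangle_q$ from Zagier's computation. The paper is terser than you are --- it simply cites \cite[Equation~(37)]{Zag16} for the whole equivariance package and uses $\langle Q_1 f\rangle_q=0$ --- but the content is the same.

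One technical point you gloss over: $\Delta$ does \emph{not} preserve $\Lambda^*$ (e.g.\ $\Delta Q_3=-\tfrac12 Q_1$), so your sentence ``the explicit second-order differential operator $\Delta$ on $\Lambda^*$'' is not quite right. The paper handles this by passing to the larger ring $\mathcal R=\Lambda^*[Q_1]$, where the genuine $\sltwo$-triple is $(\hat Q_2,\Delta,\hat E)$ with $\hat Q_2=Q_2-\tfrac12 Q_1^2$ and $\hat E=E-Q_1\boldsymbol\partial-\tfrac12$; the equivariance is stated on $\mathcal R$ and then restricted, using $\langle Q_1 f\rangle_q=0$. Equivalently one may stay on $\Lambda^*$ and use $\projection\Delta$ as lowering operator, checking that $(Q_2,\projection\Delta,E-\tfrac12)$ is still an $\sltwo$-triple there; either fix is routine, but you should flag it.
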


The answer to the question in the title is provided by one of the operators $\Delta$ which defines this $\sltwo$-action on $\Lambda^*$. Namely, letting $\mathcal{H}=\ker \Delta|_{\Lambda^*}$, we prove the following theorem:

\begin{theorem}\label{thm:harmonicmodular}
Let $f\in \Lambda^*$. Then $\langle f \rangle_q$ is modular if and only if $f=h+k$ with $h\in \mathcal{H}$ and $k\in \ker\langle \cdot \rangle_q$.  
\end{theorem}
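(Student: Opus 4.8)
The plan is to use Proposition~\ref{prop:sltwobo1} to rewrite the assertion ``$\langle f\rangle_q$ is modular'' as a statement purely about the operator $\Delta$, and then to read the decomposition of $f$ off the $\sltwo$-module structure of $\Lambda^*$. Write $\Delta^+$ for the raising operator of the $\sltwo$-triple on $\Lambda^*$, and use the same symbols $\Delta,\Delta^+$ for the corresponding operators on $\widetilde M$ from Proposition~\ref{prop:sltwobo1}; equivariance of the $q$-bracket then gives $\langle\Delta g\rangle_q=\Delta\langle g\rangle_q$ and $\langle\Delta^+ g\rangle_q=\Delta^+\langle g\rangle_q$ for all $g\in\Lambda^*$. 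I will use two further inputs, both to be taken from Section~\ref{sec:3} and the structure theory of quasimodular forms: (i) on $\widetilde M$ the kernel of the lowering operator is exactly the space of modular forms, $\ker(\Delta|_{\widetilde M})=M$, and the raising operator strictly increases the depth, so that for $g_0,\dots,g_N\in M$ the form $\sum_{j=0}^N(\Delta^+)^j g_j$ is modular only if $g_1=\dots=g_N=0$; and (ii) on $\Lambda^*$ the \emph{harmonic decomposition}
\[
  \Lambda^*=\bigoplus_{j\ge 0}(\Delta^+)^j\,\mathcal H
\]
holds as vector spaces --- the analogue for $\Lambda^*$ of the classical decomposition of a polynomial as a sum of terms $r^{2j}$ times a harmonic polynomial, cf.\ Formula~(\ref{eq:mainharmpol}).

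The ``if'' direction should be immediate: if $f=h+k$ with $h\in\mathcal H$ and $k\in\ker\langle\cdot\rangle_q$, then $\langle f\rangle_q=\langle h\rangle_q$, and equivariance gives $\Delta\langle h\rangle_q=\langle\Delta h\rangle_q=0$, so $\langle h\rangle_q\in\ker(\Delta|_{\widetilde M})=M$ is modular; no harmonic decomposition is needed here.

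For the ``only if'' direction I would argue as follows. Assume $\langle f\rangle_q$ is modular and write $f=\sum_{j\ge 0}(\Delta^+)^j h_j$ with $h_j\in\mathcal H$, a finite sum since $\Delta^+$ raises the weight. Applying $\langle\cdot\rangle_q$ and using equivariance gives $\langle f\rangle_q=\sum_{j\ge 0}(\Delta^+)^j g_j$ with $g_j:=\langle h_j\rangle_q$; and $\Delta g_j=\langle\Delta h_j\rangle_q=0$, so each $g_j\in M$. By input (i) this forces $g_j=0$ for all $j\ge 1$, hence $\langle(\Delta^+)^j h_j\rangle_q=(\Delta^+)^j g_j=0$ for $j\ge 1$. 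Then $h:=h_0\in\mathcal H$ and $k:=f-h_0=\sum_{j\ge 1}(\Delta^+)^j h_j$ lies in $\ker\langle\cdot\rangle_q$, giving the required decomposition $f=h+k$.

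The routine parts will be the two short equivariance computations; the real content sits in the two inputs, and the hard part will be input (ii): proving that $\Lambda^*$ decomposes as $\bigoplus_{j}(\Delta^+)^j\mathcal H$ and giving an explicit description of $\mathcal H=\ker\Delta$ --- i.e.\ turning the formal analogy with classical harmonic polynomials into a theorem (injectivity of each $(\Delta^+)^j$ on $\mathcal H$ and exhaustion of $\Lambda^*$), which, per the abstract, is the main work of the paper and presumably occupies the later sections. A secondary technical point will be to identify, through Proposition~\ref{prop:sltwobo1}, the abstract operator $\Delta$ on $\widetilde M$ with the concrete depth-lowering operator on quasimodular forms, so that ``$\ker\Delta=M$'' and the behaviour of the depth filtration used in input (i) are genuinely available.
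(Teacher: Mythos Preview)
Your proposal is correct and follows essentially the same strategy as the paper: both arguments rest on the harmonic decomposition $f=\sum_{r}Q_2^{r}h_r$ with $h_r\in\mathcal H$ (your input~(ii), the paper's Theorem~\ref{thm:exp}), the equivariance from Proposition~\ref{prop:sltwobo1}, and the fact that $\hat D$ strictly increases the depth (your input~(i), the paper's Lemma~\ref{lem:depth}). Your route is in fact slightly more direct than the paper's: you write $\langle f\rangle_q=\sum_r\hat D^{\,r}\langle h_r\rangle_q$ straight away and read off $\langle h_r\rangle_q=0$ for $r\ge1$ from the depth filtration, whereas the paper first applies $\Delta$ and invokes the commutator formula of Lemma~\ref{lem:DeltaQ2} to obtain the relation $\sum_{r\ge1} r(n-r-\tfrac32)\hat D^{\,r-1}\langle h_r\rangle_q=0$ before running the same depth argument. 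One small caveat: the raising operator of the $\sltwo$-triple is $\hat Q_2=Q_2-\tfrac12 Q_1^2$, which does not preserve $\Lambda^*$; the decomposition you need (and which the paper proves) is literally $\Lambda^*=\bigoplus_j Q_2^{\,j}\mathcal H$, and the passage to $\hat D$ on the $q$-bracket side uses $\langle Q_1 g\rangle_q=0$. This is harmless for your argument but worth making explicit when you identify your abstract $\Delta^+$ with the concrete operators.
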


The last section of this article is devoted to describing the graded algebra $\mathcal{H}$. We call $\mathcal{H}$ the space of \emph{shifted symmetric \emph{harmonic} polynomials}, as the description of this space turns out to be very similar to the space of classical harmonic polynomials. Let $\mathcal{P}_d$ be the space of polynomials of degree $d$ in $m\geq 3$ variables $x_1,\ldots,x_m$, let $||x||^2=\sum_{i}x_i^2$ and recall that the space $\mathscr{H}_d$ of degree $d$ harmonic polynomials is given by $\ker\sum_{i=1}^r \pdv{^2}{x_i^2}$. The main theorem of harmonic polynomials states that every polynomial $P\in \mathcal{P}_d$ can uniquely be written in the form
\begin{equation}\label{eq:mainharmpol}
P=h_0+||x||^2h_1+\ldots+||x||^{2d'}h_{d'}
\end{equation}
with $h_i\in \mathscr{H}_{d-2i}$ and $d'=\lfloor d/2\rfloor$. Define $\Kelvin $, the Kelvin transform, and $D^\alpha$ for $\alpha$ an $m$-tuple of non-negative integers by
\[f(x)\mapsto ||x||^{2-m}f\left(\frac{x}{||x||^2}\right) \quad \text{and} \quad D^\alpha = \prod_i \pdv{^\alpha_i}{x_i^{\alpha_i}}.\]  An explicit basis for $\mathscr{H}_d$ is given by 
$$\{\Kelvin D^\alpha \Kelvin (1) \mid \alpha \in \z_{\geq 0}^m, \textstyle\sum_i \alpha_i=d, \alpha_1\leq 1\},$$
see for example \cite{ABR01}. We prove the following analogous results for the space of shifted symmetric polynomials:
\begin{theorem}\label{thm:exp}
For every $f\in \Lambda^*_n$ there exists unique $h_{i}\in \mathcal{H}_{n-2i}$ \textup{(}$i=0,1,\ldots, n'$ and $n'={\lfloor \tfrac{n}{2}\rfloor}$\textup{)} such that
\[f= h_0+Q_2h_1+\ldots+Q_2^{n'}h_{n'}, \]
where $Q_2$ is an element of $\Lambda^*_2$ given by $Q_2(\lambda)=|\lambda|-\frac{1}{24}$.
\end{theorem}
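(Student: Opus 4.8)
The plan is to mimic the classical proof of the harmonic decomposition (\ref{eq:mainharmpol}), replacing the $\mathfrak{sl}_2$-triple $(\text{raising} = \text{mult. by } \|x\|^2,\ \text{lowering} = \tfrac14\sum \partial_i^2,\ \text{weight})$ acting on $\mathcal{P} = \bigoplus_d \mathcal{P}_d$ by the $\mathfrak{sl}_2$-action on $\Lambda^*$ appearing in Proposition~\ref{prop:sltwobo1}. Concretely, $\Delta$ plays the role of the lowering operator, multiplication by $Q_2$ should be (up to scalar) the raising operator, and the grading by weight $n$ on $\Lambda^*_n$ should be (an affine shift of) the Cartan element. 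The key structural input I would establish first is exactly this: that $[\Delta, Q_2\cdot] = c\,(E + \text{const})$ as operators on $\Lambda^*$, where $E$ is the weight operator, and that these give a genuine $\mathfrak{sl}_2$-action under which $\Lambda^*$ decomposes into finite-dimensional irreducibles. Once this is in hand, the theorem is a statement purely about finite-dimensional $\mathfrak{sl}_2$-representations.

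First I would verify that $\Lambda^*_n$ is finite-dimensional for each $n$ and that $Q_2 \cdot : \Lambda^*_{n} \to \Lambda^*_{n+2}$ and $\Delta : \Lambda^*_n \to \Lambda^*_{n-2}$, so that each graded piece $\Lambda^*_n \oplus \Lambda^*_{n-2} \oplus \cdots$ closes up under the $\mathfrak{sl}_2$-triple into a finite-dimensional module. Then, by complete reducibility, this module decomposes as a direct sum of irreducibles, and in each irreducible the lowest-weight vector is annihilated by $\Delta$, i.e.\ lies in $\mathcal{H}$; the higher-weight vectors are obtained from it by applying $Q_2\cdot$. Summing over all irreducibles gives the decomposition $\Lambda^*_n = \bigoplus_{i=0}^{n'} Q_2^i\,\mathcal{H}_{n-2i}$. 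The main point requiring care is \emph{injectivity} of $Q_2\cdot$ on $\mathcal{H}$ (and on all of $\Lambda^*$), equivalently that no irreducible summand is "truncated" — this follows from the $\mathfrak{sl}_2$ commutation relation since on a weight vector $v$ of weight $\mu > 0$ one computes $\Delta Q_2^j v$ in terms of lower powers and positivity of the relevant coefficients forces $Q_2^j v \neq 0$; equivalently, $Q_2\cdot$ raises weight and in a finite-dimensional $\mathfrak{sl}_2$-module the raising operator is injective except at the top. Uniqueness of the $h_i$ is then automatic from the directness of the sum.

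The hard part will be pinning down the precise $\mathfrak{sl}_2$-relations on the $\Lambda^*$ side — in particular identifying which operator is the raising operator of the triple and checking it is (a multiple of) multiplication by $Q_2$, and computing the exact constant in $[\Delta, Q_2\cdot]$. This requires unwinding the definition of $\Delta$ and the $\mathfrak{sl}_2$-action from Section~\ref{sec:3}; I expect this bookkeeping, rather than any deep new idea, to be the technical crux. A secondary subtlety is that the grading operator may need an affine shift (the "$-\tfrac{1}{24}$" in $Q_2$ and the weight normalisation of quasimodular forms both hint at this), so the weights appearing in the irreducibles may be $n, n-2, \dots$ only up to an overall additive constant; this does not affect the combinatorial structure of the decomposition but must be tracked to get the index range $i = 0, \dots, n'$ correct. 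Finally, I would double-check the edge cases $n \le 1$ (where $Q_2 \cdot$ contributes nothing and $\Lambda^*_n = \mathcal{H}_n$) against the claimed formula.
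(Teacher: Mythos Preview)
Your strategy---exploit an $\sltwo$-triple with $Q_2$ as raising operator, $\Delta$ as lowering operator, and a shifted weight operator as Cartan element---is exactly the conceptual framework the paper adopts, and the ``positivity of coefficients'' computation you mention is indeed the heart of the matter (it is the content of Proposition~\ref{prop:q2multiple}). Two points need correcting, however. First, $\Delta$ does \emph{not} preserve $\Lambda^*$: for instance $\Delta(Q_3)=-\tfrac{1}{2}Q_1\notin\Lambda^*$. The lowering operator on $\Lambda^*$ is $\projection\Delta$, where $\projection:\mathcal{R}\to\Lambda^*$ kills $Q_1$; with this adjustment $(Q_2,\,\projection\Delta,\,E-\tfrac{1}{2})$ is an honest $\sltwo$-triple on $\Lambda^*$. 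You anticipated some unwinding here, so this is minor.

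The more serious issue is your claim that ``$\Lambda^*_n\oplus\Lambda^*_{n-2}\oplus\cdots$ closes up under the $\sltwo$-triple into a finite-dimensional module'': it does not, since $Q_2\cdot$ sends $\Lambda^*_n$ to $\Lambda^*_{n+2}$, which lies outside your list. The $\sltwo$-module $\Lambda^*$ is genuinely infinite-dimensional (the lowest-weight submodules are Verma modules with half-integral lowest weight $n-\tfrac12$), so Weyl's complete reducibility theorem does not apply, and semisimplicity is not automatic. The paper avoids this entirely by arguing directly on each graded piece: Lemma~\ref{lem:DeltaQ2} gives $\projection\Delta(Q_2 g)=(n-\tfrac32)g+Q_2\,\projection\Delta g$ for $g\in\Lambda^*_{n-2}$, so the map $g\mapsto\projection\Delta(Q_2 g)$ from $\Lambda^*_{n-2}$ to itself is injective (Proposition~\ref{prop:q2multiple}), hence surjective by finite-dimensionality of $\Lambda^*_{n-2}$; given $f\in\Lambda^*_n$ one then chooses $g$ with $\projection\Delta(Q_2 g)=\projection\Delta f$ and sets $h=f-Q_2 g\in\mathcal{H}_n$. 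This is precisely your positivity argument promoted from a side remark to the main step, and it uses finite-dimensionality only of the individual $\Lambda^*_{n-2}$, never of the whole module.
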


\begin{theorem}\label{thm:B}
The set
$$\{\projection\, \Kelvin\, \Delta _\lambda\, \Kelvin (1) \mid \lambda \in \mathscr{P}(n), \text{all parts are } \geq 3\}$$
is a vector space basis of $\mathcal{H}_n$, where $\projection, \Kelvin $ and $\Delta_\lambda$ are defined by {\upshape(\ref{eq:def_pi})}, Definition~\ref{def:K} respectively Definition~\ref{def:Dl}.
\end{theorem}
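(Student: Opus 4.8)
The plan is to transport to the present setting the classical proof that $\{\Kelvin D^\alpha \Kelvin(1) : |\alpha|=d,\ \alpha_1\le 1\}$ is a basis of $\mathscr{H}_d$, replacing the classical facts about the Kelvin transform by the corresponding statements for $\Kelvin$ and the operators $\Delta_\mu$ recorded in Definition~\ref{def:K} and Definition~\ref{def:Dl}. Three things have to be checked: (i) every $\projection\,\Kelvin\,\Delta_\lambda\,\Kelvin(1)$ lies in $\mathcal{H}_n$; (ii) the number of partitions of $n$ all of whose parts are $\ge 3$ equals $\dim\mathcal{H}_n$; and (iii) the elements $\projection\,\Kelvin\,\Delta_\lambda\,\Kelvin(1)$ are linearly independent. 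Since (i)--(iii) together force the set to be a basis, the real content is (iii).

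For (i) I would use that $\Kelvin$ is a weight-reflecting involution of $\tLambda$ which maps $\ker\Delta$ to $\ker\Delta$, that $\Kelvin(1)\in\ker\Delta$, that the $\Delta_\mu$ commute with one another and with $\Delta$. Then $\Delta_\lambda\Kelvin(1)$ is again annihilated by $\Delta$ and lies in the weight space that $\Kelvin$ sends to weight $n$, so $\Kelvin\,\Delta_\lambda\,\Kelvin(1)$ is a $\Delta$-harmonic element of weight $n$ in $\tLambda$, and applying the projection $\projection$ onto $\Lambda^*$ of~(\ref{eq:def_pi}) lands us in $\ker\Delta|_{\Lambda^*_n}=\mathcal{H}_n$. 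This mirrors the classical observation that $D^\alpha\Kelvin(1)$ is harmonic off the origin and that $\Kelvin$ exchanges the two ends of the $\sltwo$-representation of Proposition~\ref{prop:sltwobo1}.

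For (ii) the decomposition $\Lambda^*_n=\bigoplus_{i=0}^{n'}Q_2^{\,i}\mathcal{H}_{n-2i}$ of Theorem~\ref{thm:exp} gives $\dim\mathcal{H}_n=\dim\Lambda^*_n-\dim\Lambda^*_{n-2}$. Since $\Lambda^*$ is freely generated as a commutative algebra by one element in each weight $\ge 2$, one has $\sum_n(\dim\Lambda^*_n)\,q^n=\prod_{k\ge 2}(1-q^k)^{-1}$, hence $\sum_n(\dim\mathcal{H}_n)\,q^n=(1-q^2)\prod_{k\ge 2}(1-q^k)^{-1}=\prod_{k\ge 3}(1-q^k)^{-1}$; that is, $\dim\mathcal{H}_n$ equals the number of partitions of $n$ into parts $\ge 3$, as wanted. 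The restriction to parts $\ge 3$ here plays the role of the classical restriction $\alpha_1\le 1$: a part equal to $1$ or $2$ makes $\projection\,\Kelvin\,\Delta_\lambda\,\Kelvin(1)$ either vanish or redundant, for the same reason that $\partial_1^{\,2}$ is redundant modulo $\Delta=\sum_i\partial_i^{\,2}$ in the classical case — here because the weight-$2$ operator is, up to a scalar, $\Delta$ itself (which kills $\Kelvin(1)$), while there is no operator in weight $1$.

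It remains to prove (iii), which I expect to be the main obstacle. The strategy is to equip $\Lambda^*$ with a filtration — for instance the one induced by a monomial order on the polynomial generators $Q_2,Q_3,\dots$ — and to show that the leading term of $\projection\,\Kelvin\,\Delta_\lambda\,\Kelvin(1)$ is a nonzero scalar multiple of $\prod_i Q_{\lambda_i}$. As $\lambda$ ranges over partitions of $n$ with all parts $\ge 3$ these leading terms are pairwise distinct, so linear independence follows, and together with (i) and (ii) the theorem follows. Equivalently, one may pair the elements $\projection\,\Kelvin\,\Delta_\mu\,\Kelvin(1)$ against a natural dual family and check that the pairing matrix is triangular with nonzero diagonal. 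The hard part is to make the composite $\projection\,\Kelvin\,\Delta_\mu\,\Kelvin$ explicit enough on $\Kelvin(1)$ to read off this leading term, and in particular to see that the admissible $\lambda$ are exactly the indices producing a nonzero, distinct leading term — the analogue of checking that $\{\partial^\alpha:\alpha_1\le 1\}$ is a complete set of standard monomials for $\q[\partial_1,\dots,\partial_m]/(\Delta)$.
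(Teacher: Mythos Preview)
Your plan is correct and coincides with the paper's proof: steps (i) and (ii) are exactly the paper's argument (harmonicity via the commutation relations for $\Delta_\lambda$, and the dimension count of Corollary~\ref{cor:dim}), and your strategy for (iii)---extracting $Q_\lambda$ as the leading term---is precisely Proposition~\ref{prop:Dlambda}, where the filtration used is the $Q_2$-adic one, giving $h_\lambda \equiv (\tfrac{3}{2})_n\,n!\,Q_\lambda \pmod{Q_2\Lambda^*_{n-2}}$. One small slip to fix: $\mathcal{H}_n$ is $\ker(\projection\Delta)|_{\Lambda^*_n}$, not $\ker\Delta|_{\Lambda^*_n}$ (for instance $\Delta Q_3=-\tfrac12 Q_1\neq 0$), and likewise $\Delta$ does not literally annihilate $\Kelvin(1)=Q_2^{3/2}$ but only maps it into $Q_1\tR$; this does not affect the argument, since everywhere you only need $\projection\Delta$.
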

The action of $\sltwo$ given by Proposition~\ref{prop:sltwobo1} makes $\Lambda^*$ into an infinite-dimensional $\sltwo$-representation for which the elements of $\mathcal{H}$ are the lowest weight vectors. Theorem~\ref{thm:exp} is equivalent to the statement that $\Lambda^*$ is a direct sum of the (not necessarily irreducible) lowest weight modules 
\[\displaystyle V_n=\bigoplus_{m=0}^\infty Q_2^m \mathcal{H}_n \quad \quad (n\in \z).\]

\section{Shifted symmetric polynomials}\label{sec:ssp}
Shifted symmetric polynomials were introduced by Okounkov and Olshanski as the following analogue of symmetric polynomials \cite{OO97}. Let $\Lambda^*(m)$ be the space of rational polynomials in $m$ variables $x_1,\ldots, x_m$ which are \emph{shifted symmetric}, i.e. invariant under the action of all $\sigma\in\mathfrak{S}_m$ given by $x_i\mapsto x_{\sigma(i)}+i-\sigma(i)$ (or more symmetrically $x_i-i\mapsto x_{\sigma(i)}-\sigma(i)$). Note that $\Lambda^*(m)$ is filtered by the degree of the polynomials. We have forgetful maps $\Lambda^*(m)\to \Lambda^*({m-1})$ given by $x_m\mapsto 0$, so that we can define the space of shifted symmetric polynomials $\Lambda^*$ as $\displaystyle\varprojlim_m \Lambda^*(m)$ in the category of filtered algebras. Considering a partition $\lambda$ as a non-increasing sequence $(\lambda_1,\lambda_2,\ldots)$ of non-negative integers $\lambda_i$, we can interpret $\Lambda^*$ as being a subspace of all functions $\mathscr{P}\to \q$. 

One can find a concrete basis for this abstractly defined space by considering the generating series
\begin{equation}\label{eq:gs} w_\lambda(T):=\sum_{i=1}^\infty T^{\lambda_i-i+\tfrac{1}{2}} \in T^{1/2}\z[T][[T^{-1}]] \end{equation}
for every $\lambda\in\mathscr{P}$ (the constant $\tfrac{1}{2}$ turns out to be convenient for defining a grading on $\Lambda^*$). As $w_\lambda(T)$ converges for $T>1$ and equals 
$$\frac{1}{T^{1/2}-T^{-1/2}} + \sum_{i=1}^{\ell(\lambda)}\left( T^{\lambda_i-i+\tfrac{1}{2}}-T^{-i+\tfrac{1}{2}}\right)$$
one can define shifted symmetric polynomials $Q_i(\lambda)$ for $i\geq 0$ by
\begin{equation}\label{eq:def_qk}\sum_{i=0}^\infty Q_i(\lambda) z^{i-1} := w_\lambda(e^z) \quad \quad (0<|z|<2\pi).\end{equation}
The first few shifted symmetric polynomials $Q_i$ are given by
$$Q_0(\lambda)=1,\quad Q_1(\lambda)=0,\quad Q_2(\lambda)=|\lambda|-\tfrac{1}{24}.$$
The $Q_i$ freely generate the algebra of shifted symmetric polynomials, i.e. $\Lambda^*=\q[Q_2,Q_3,\ldots]$. It is believed that $\Lambda^*$ is maximal in the sense that for all $Q:\mathscr{P}\to \q$ with $Q\not \in \Lambda^*$ it holds that $\langle \Lambda^*[Q]\rangle_q \not \subseteq \widetilde{M}$.

\begin{remark} The space $\Lambda^*$ can equally well be defined in terms of the Frobenius coordinates. Given a partition with Frobenius coordinates $(a_1,\ldots, a_r,b_1,\ldots, b_r)$, where $a_i$ and $b_i$ are the arm- and leg-lenghts of the cells on the main diagonal, let 
$$C_\lambda=\{-b_1-\tfrac{1}{2},\ldots, -b_r-\tfrac{1}{2},a_r+\tfrac{1}{2},\ldots,a_1+\tfrac{1}{2} \}.$$
Then
$$Q_k(\lambda) =\beta_k + \frac{1}{(k-1)!}\sum_{c\in C_{\lambda}} \sgn(c)c^{k-1},$$
where $\beta_k$ is the constant given by
\[\sum_{k\geq 0}\beta_k z^{k-1} = \frac{1}{2\sinh(z/2)} = w_\emptyset(e^z). \qedhere\]
\end{remark}

We extend $\Lambda^*$ to an algebra where $Q_1\not \equiv 0$. Observe that a non-increasing sequence $(\lambda_1,\lambda_2,\ldots)$ of integers corresponds to a partition precisely if it converges to $0$. If, however, it converges to an integer $n$, Equations~(\ref{eq:gs}) and (\ref{eq:def_qk}) still define $Q_k(\lambda)$. In fact, in this case 
$$Q_k(\lambda) = (e^{n\boldsymbol{\partial}})Q_k(\lambda-n)$$
by \cite[Proposition 1]{Zag16} where $\boldsymbol{\partial} Q_0=0$, $\boldsymbol{\partial} Q_k = Q_{k-1}$ for $k\geq 1$ and $\lambda-n=(\lambda_1-n,\lambda_2-n,\ldots)$ corresponds to a partition (i.e. converges to $0$). In particular $Q_1(\lambda)=n$ equals the number the sequence $\lambda$ converges to. We now define the Bloch--Okounkov ring $\mathcal{R}$ to be $\Lambda^*[Q_1]$, considered as a subspace of all functions from non-increasing eventually constant sequences of integers to $\q$. It is convenient to work with $\mathcal{R}$ instead of $\Lambda^*$ to define the differential operators $\Delta$ and more generally $\Delta_\lambda$ later. Both on $\Lambda^*$ and $\mathcal{R}$ we define a weight grading by assigning to $Q_i$ weight $i$. Denote the projection map by 
\begin{align}\label{eq:def_pi} \projection:\mathcal{R}\to \Lambda^*.\end{align} 
We extend $\langle \cdot \rangle_q$ to $\mathcal{R}$. 

The operator $E=\sum_{m=0}^\infty Q_m \pdv{}{Q_{m}}$ on $\mathcal{R}$ multiplies an element of $\mathcal{R}$ by its weight. Moreover, we consider the differential operators
$$\boldsymbol{\partial} = \sum_{m=0}^\infty Q_m \pdv{}{Q_{m+1}} \quad \text{and} \quad \mathscr{D} = \sum_{k,\ell\geq 0} \binom{k+\ell}{k} Q_{k+\ell} \frac{\partial^2}{\partial Q_{k+1} \partial Q_{\ell+1}}. $$
Let $\Delta =\tfrac{1}{2}(\mathscr{D}-\boldsymbol{\partial}^2)$, i.e.
$$2\Delta  = \sum_{k,\ell\geq 0}\left(\binom{k+\ell}{k}Q_{k+\ell}-Q_kQ_\ell\right) \frac{\partial^2}{\partial Q_{k+1} \partial Q_{\ell+1}}-\sum_{k\geq 0} Q_k \pdv{}{Q_{k+2}}.$$
In the following (antisymmetric) table the entry in the row of operator $A$ and column of operator $B$ denotes the commutator $[A,B]$, for proofs see \cite[Lemma 3]{Zag16}.
\setlength\arraycolsep{8pt}
\renewcommand{\arraystretch}{1.2}
$$\begin{array}{c | c c c c c}\label{tbl:comm}
&\Delta & \boldsymbol{\partial} & E& Q_1 & Q_2 \\ \hline
\Delta & 0 & 0 & 2\Delta & 0 & E-Q_1\boldsymbol{\partial}-\tfrac{1}{2} \\
\boldsymbol{\partial} & 0 & 0 & \boldsymbol{\partial}& 1 &Q_1 \\
E & -2\Delta & -\boldsymbol{\partial}  & 0 & Q_1 & 2Q_2 \\
Q_1 & 0 & -1 & -Q_1 & 0 & 0 \\
Q_2 & -E+Q_1\boldsymbol{\partial}+\tfrac{1}{2} & 0 & -2 Q_2 & 0 & 0
\end{array}$$

\begin{definition} A triple $(X,Y,H)$ of operators is called an \emph{$\sltwo$-triple} if
$$[H,X]=2X, \quad [H,Y]=-2Y, \quad [Y,X]=H.$$
\end{definition}

Let $\hat{Q}_2:=Q_2-\tfrac{1}{2}Q_1^2$  and  $\hat{E}:=E-Q_1\boldsymbol{\partial}-\frac{1}{2}.$ The following result follows by a direct computation using above table:
\begin{proposition}
The operators $(\hat{Q}_2,\Delta,\hat{E})$ form an $\sltwo$-triple. \hfill \mbox{\qed}
\end{proposition}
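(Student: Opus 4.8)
The plan is to verify the three defining relations of an $\sltwo$-triple for $(X,Y,H)=(\hat{Q}_2,\Delta,\hat{E})$, namely $[\hat{E},\hat{Q}_2]=2\hat{Q}_2$, $[\hat{E},\Delta]=-2\Delta$ and $[\Delta,\hat{Q}_2]=\hat{E}$, by reducing each to entries of the commutator table above together with the derivation identity $[AB,C]=A[B,C]+[A,C]B$. I would first dispose of $[\Delta,\hat{Q}_2]$: since $[\Delta,Q_1]=0$ we also have $[\Delta,Q_1^2]=0$, so $[\Delta,\hat{Q}_2]=[\Delta,Q_2]-\tfrac12[\Delta,Q_1^2]=[\Delta,Q_2]=E-Q_1\boldsymbol{\partial}-\tfrac12=\hat{E}$ directly from the table.

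For the two relations involving $\hat{E}=E-Q_1\boldsymbol{\partial}-\tfrac12$, I would expand the bracket linearly and handle the term $[Q_1\boldsymbol{\partial},\,\cdot\,]$ with the Leibniz rule. The key step is to show that $Q_1\boldsymbol{\partial}$ commutes with both $\Delta$ and $\hat{Q}_2$. For $\Delta$ this is immediate from $[\boldsymbol{\partial},\Delta]=[Q_1,\Delta]=0$. For $\hat{Q}_2$ one uses $[Q_1,\hat{Q}_2]=[Q_1,Q_2]-\tfrac12[Q_1,Q_1^2]=0$ together with the cancellation $[\boldsymbol{\partial},\hat{Q}_2]=[\boldsymbol{\partial},Q_2]-\tfrac12[\boldsymbol{\partial},Q_1^2]=Q_1-Q_1=0$, where $[\boldsymbol{\partial},Q_1^2]=2Q_1$ since $[\boldsymbol{\partial},Q_1]=1$. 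Granting this, $[\hat{E},\Delta]=[E,\Delta]=-2\Delta$ and $[\hat{E},\hat{Q}_2]=[E,\hat{Q}_2]=[E,Q_2]-\tfrac12[E,Q_1^2]=2Q_2-Q_1^2=2\hat{Q}_2$, using $[E,Q_1]=Q_1$ so that $[E,Q_1^2]=2Q_1^2$.

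There is no genuine obstacle here — this is what the text calls ``a direct computation using above table''. The only point demanding attention is the bookkeeping: $Q_1,Q_2,\hat{Q}_2$ act by multiplication while $\Delta,\boldsymbol{\partial},E$ are differential operators, so one must consistently use the correct form of the Leibniz identity and track which factor the leftover operator lands on. Once the small identities $[\boldsymbol{\partial},Q_1^2]=2Q_1$, $[E,Q_1^2]=2Q_1^2$ and $[Q_1,Q_1^2]=0$ are recorded, the $Q_1$-corrections in $\hat{Q}_2$ and $\hat{E}$ conspire to cancel and the three $\sltwo$-relations drop out.
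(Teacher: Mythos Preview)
Your proposal is correct and is exactly the ``direct computation using above table'' that the paper alludes to in lieu of a written-out proof. The three verifications you outline are the natural ones, and the auxiliary identities $[\boldsymbol{\partial},Q_1^2]=2Q_1$, $[E,Q_1^2]=2Q_1^2$, $[\Delta,Q_1^2]=0$ together with $[Q_1\boldsymbol{\partial},\Delta]=[Q_1\boldsymbol{\partial},\hat{Q}_2]=0$ are precisely what makes the $Q_1$-corrections drop out.
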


For later reference we compute $[\Delta,Q_2^n]$. This could be done inductively by noting that $[\Delta,Q_2^n] = Q_2^{n-1}[\Delta,Q_2]+[\Delta,Q_2^{n-1}]Q_2$ and using the commutation relations in above table. The proof below is a direct computation from the definition of $\Delta$. 

\begin{lemma}\label{lem:DeltaQ2}
For all $n\in \n$ the following relation holds
$$[\Delta,Q_2^n]=-\frac{n(n-1)}{2}Q_1^2Q_2^{n-2}-nQ_1Q_2^{n-1}\boldsymbol{\partial}+nQ_2^{n-1}(E+n-\tfrac{3}{2}).$$
\end{lemma}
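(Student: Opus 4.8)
The plan is to prove the identity $[\Delta,Q_2^n]=-\tfrac{n(n-1)}{2}Q_1^2Q_2^{n-2}-nQ_1Q_2^{n-1}\boldsymbol{\partial}+nQ_2^{n-1}(E+n-\tfrac{3}{2})$ by a direct computation, treating $\Delta$ as the second-order differential operator given explicitly above and applying the Leibniz rule. Since $[\Delta,Q_2^n]$ is again a differential operator of order at most one, it suffices to compute how $\Delta$ interacts with multiplication by $Q_2^n$ via the product rule for the two pieces $\mathscr{D}$ and $\boldsymbol{\partial}^2$ making up $2\Delta$.

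First I would record the elementary fact that for a first-order operator $\partial/\partial Q_{j}$ one has $\tfrac{\partial}{\partial Q_j}(Q_2^n g) = Q_2^n \tfrac{\partial g}{\partial Q_j} + n Q_2^{n-1}\delta_{j,2}\, g$; iterating this gives the action of the second-order operators on $Q_2^n g$. Concretely, for a monomial second-order piece $A\,\tfrac{\partial^2}{\partial Q_{k+1}\partial Q_{\ell+1}}$ (where $A$ is $Q_{k+\ell}$ or $Q_kQ_\ell$, commuting with $Q_2$), the commutator with $Q_2^n$ produces three types of terms: $n(n-1)Q_2^{n-2}$ times a contribution present only when both $k+1=2$ and $\ell+1=2$ (i.e. $k=\ell=1$), and $nQ_2^{n-1}$ times contributions with one derivative killed (when $k+1=2$, i.e. $k=1$, leaving $\partial/\partial Q_{\ell+1}$, and symmetrically). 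For the $-\boldsymbol\partial^2 = -(\sum Q_m\partial/\partial Q_{m+1})^2$ piece, one similarly uses that $\boldsymbol\partial$ commutes with $Q_2$ only up to the lower-order term recorded in the commutator table ($[\boldsymbol\partial,Q_2]=Q_1$), so it is cleanest to expand $\boldsymbol\partial^2(Q_2^n g)$ directly using $\boldsymbol\partial(Q_2^n)= nQ_1Q_2^{n-1}$ and then $\boldsymbol\partial^2(Q_2^n) = n(n-1)Q_1^2 Q_2^{n-2} + nQ_0 Q_2^{n-1}\cdot 0$ — being careful that $\boldsymbol\partial Q_1 = Q_0 = 1$, so in fact $\boldsymbol\partial^2(Q_2^n)= n(n-1)Q_1^2Q_2^{n-2}+nQ_2^{n-1}$.

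Carrying this out, the $-\boldsymbol\partial^2$ contribution to $[2\Delta,Q_2^n]$ is $-n(n-1)Q_1^2Q_2^{n-2} - nQ_2^{n-1} - 2nQ_1Q_2^{n-1}\boldsymbol\partial$, while the $\mathscr{D}$ contribution reduces, after isolating the $k=1$ and/or $\ell=1$ summands and using $\binom{k+\ell}{k}Q_{k+\ell}-Q_kQ_\ell$ evaluated appropriately, to an expression of the form $2nQ_2^{n-1}(E+c)$ for an explicit constant — here the operator $E$ reappears because the surviving single derivative $\partial/\partial Q_{\ell+1}$ gets paired with a coefficient $Q_{\ell}$ (from $k=1$, $\binom{\ell+1}{1}Q_{\ell+1}$ and the like), and $\sum_\ell (\ell+1) Q_{\ell+1}\partial/\partial Q_{\ell+1}$ is essentially $E$ up to reindexing. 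Adding the two contributions and dividing by $2$ should yield exactly the claimed formula; the constant $n-\tfrac32$ will emerge from combining the various numerical shifts ($-1$ from $\boldsymbol\partial^2$, the $\tfrac12$ and weight shifts from $\mathscr{D}$).

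The main obstacle I anticipate is purely bookkeeping: correctly extracting, from the double sum defining $\mathscr{D}$, the diagonal ($k=\ell=1$), the two "edge" families ($k=1$ with $\ell$ arbitrary and vice versa), and verifying that the binomial coefficients and the subtracted $Q_kQ_\ell$ terms conspire to give clean coefficients — in particular that the $k=1,\ell=1$ overlap is not double-counted and that the reindexed sum genuinely equals $E$ (with the correct constant) rather than $E$ plus spurious lower-weight operators. A sanity check I would run along the way is the case $n=1$, where the formula must reproduce the $[\Delta,Q_2]$ entry $E-Q_1\boldsymbol\partial-\tfrac12$ of the commutator table, and $n=2$, which should match what the inductive relation $[\Delta,Q_2^2]=Q_2[\Delta,Q_2]+[\Delta,Q_2]Q_2$ gives after moving all $Q_2$'s to the left using the table.
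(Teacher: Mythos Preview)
Your plan is correct and is essentially the paper's own approach: a direct Leibniz-rule computation of how the second-order operator $\Delta$ interacts with multiplication by $Q_2^n$. The only organizational difference is that the paper first packages the cross terms into a single Leibniz-type identity $\Delta(fg)=\Delta(f)\,g+\tfrac{\partial f}{\partial Q_2}(Eg-Q_1\boldsymbol{\partial} g)+f\,\Delta(g)$ for $f\in\q[Q_1,Q_2]$ and then specializes to $f=Q_2^n$, whereas you split $2\Delta=\mathscr{D}-\boldsymbol{\partial}^2$ and treat the two pieces separately; carrying out your $\mathscr{D}$ step gives $[\mathscr{D},Q_2^n]=2nQ_2^{n-1}E+2n(n-1)Q_2^{n-1}$, which combined with your $\boldsymbol{\partial}^2$ computation yields the stated formula.
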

\begin{proof}
Let $f\in \q[Q_1,Q_2]$, $g\in \mathscr{R}$ and $a\in \n$. Then 
\begin{align}\label{eq:Dfg}\Delta (fg) &= \Delta (f) g + \pdv{f}{Q_2}(Eg-Q_1 \boldsymbol{\partial} g)+f\Delta (g), \\
\label{eq:Dq_2^a}\Delta (Q_2^n)&=n(n-\tfrac{3}{2})Q_2^{n-1}-n(n-1)Q_2^{n-2}Q_1^2.
\end{align}
By (\ref{eq:Dfg}) and (\ref{eq:Dq_2^a}) we find
\[\Delta (Q_2^ng)=
(n(n-\tfrac{3}{2})Q_2^{n-1}-\frac{n(n-1)}{2}Q_1^2Q_2^{n-2})g+nQ_2^{n-1}(Eg-Q_1 \boldsymbol{\partial} g)+Q_2^{n}\Delta (g). \qedhere \]
\end{proof}

\section{An $\sltwo$-equivariant mapping}\label{sec:3}
The space of quasimodular forms for $\mathrm{SL}_2(\z)$ is given by $\widetilde{M} =\q[P,Q,R]$, where $P,Q$ and $R$ are the Eisenstein series of weight $2,4$ and $6$ respectively (in Ramanujan's notation). We let $\widetilde{M}^{(\leq p)}_k$ be the space of quasimodular forms of weight $k$ and depth $\leq p$ (the depth of a quasimodular form written as a polynomial in $P,Q$ and $R$ is the degree of this polynomial in $P$). See \cite[Section 5.3]{Zag08} or \cite[Section 2]{Zag16} for an introduction into quasimodular forms. 

The space of quasimodular forms is closed under differentiation, more precisely the operators 
$D=q\frac{d}{dq}$, $\mathfrak{d}=12\pdv{}{P}$ and the weight operator $W$ given by $Wf=kf$ for $f\in \widetilde{M}_k$ preserve $\widetilde{M}$ and form an $\mathfrak{sl}_2$-triple. In order to compute the action of $D$ in terms of the generators $P,Q$ and $R$ one uses the Ramanujan identities
$$D(P)=\frac{P^2-Q}{12}, \quad D(Q)=\frac{PQ-R}{3}, \quad D(R)= \frac{PR-Q^2}{2}.$$
In the context of the Bloch--Okounkov theorem it is more natural to work with $\hat{D} := D - \frac{P}{24}$, as for all $f\in \Lambda^*$ one has $\langle Q_2 f\rangle_q = \hat{D} \langle f \rangle_q$. Moreover, $\hat{D}$ has the property that it increases the depth of a quasimodular form by $1$, in contrast to $D$ for which $D(1)=0$ does not have depth $1$: 

\begin{lemma}\label{lem:depth}
Let $f\in \widetilde{M}$ be of depth $r$. Then $\hat{D}f$ is of depth $r+1$. 
\end{lemma}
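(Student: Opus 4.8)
The plan is to argue by contradiction using the depth-lowering operator $\mathfrak{d} = 12\,\partial/\partial P$ and the commutation relation between $\mathfrak{d}$ and $\hat D$. First I would write $f = \sum_{j=0}^{r} P^j g_j$ with $g_j \in \q[Q,R]$ (depth-$0$ pieces) and $g_r \neq 0$, so that $\mathfrak{d}^r f = 12^r\, r!\, g_r \neq 0$ while $\mathfrak{d}^{r+1} f = 0$. The point is that $\hat D f$ is manifestly a quasimodular form (since $D$ preserves $\widetilde M$ and $P \in \widetilde M_2$), so it has some well-defined depth; I must show that depth is exactly $r+1$, i.e. $\mathfrak{d}^{r+1}(\hat D f) \neq 0$ but $\mathfrak{d}^{r+2}(\hat D f) = 0$. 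The upper bound $\mathrm{depth}(\hat D f) \le r+1$ is easy: $D$ raises depth by at most $1$ (visible from the Ramanujan identities, each of which introduces at most one extra power of $P$) and subtracting $\tfrac{P}{24} f$ raises depth by at most $1$ as well; so $\mathfrak{d}^{r+2}(\hat D f) = 0$ follows.

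For the lower bound, the key computation is the commutator of $\mathfrak{d}$ with $\hat D$. Using $D(P) = (P^2 - Q)/12$ one gets $[\mathfrak{d}, D] = 12\,\partial/\partial P \circ D - D \circ 12\,\partial/\partial P$, which acts on a monomial by the Leibniz rule and evaluates to $[\mathfrak{d}, D] = W$ where $W$ is the weight operator — this is exactly the statement in the excerpt that $(D, \mathfrak{d}, W)$ form an $\sltwo$-triple, hence I may assume it. Since $\hat D = D - \tfrac{P}{24}$ and $[\mathfrak{d}, \tfrac{P}{24}] = \tfrac{12}{24} = \tfrac12$, we obtain $[\mathfrak{d}, \hat D] = W - \tfrac12$. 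Now I iterate: I want to compute $\mathfrak{d}^{r+1} \hat D f$ and show it is a nonzero multiple of $\mathfrak{d}^{r} f = 12^r r!\, g_r$. Writing $\mathfrak{d}^{r+1}\hat D = \mathfrak{d}^r (\hat D \mathfrak{d} + [\mathfrak{d},\hat D]) = \mathfrak{d}^r \hat D \mathfrak{d} + \mathfrak{d}^r(W - \tfrac12)$ and noting $\mathfrak{d}^{r+1} f = 0$, the first term contributes $\mathfrak{d}^r \hat D (\mathfrak{d} f)$; since $\mathfrak{d} f$ has depth $r-1$, induction on $r$ lets me control it, while the second term $\mathfrak{d}^r (W - \tfrac12) f$ is, up to the scalar coming from commuting $\mathfrak{d}^r$ past $W$, a nonzero multiple of $\mathfrak{d}^r f$ provided the weight of $g_r$ avoids the one bad value. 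Concretely, $f$ has weight $k$, so $g_r$ has weight $k - 2r$, and the operator $\mathfrak{d}^r(W-\tfrac12)$ applied to $P^r g_r$ produces the factor $(k - \tfrac12)$ from the $W$-term minus corrections; the clean way is to track the scalar by which $\mathfrak{d}^{r+1}\hat D$ acts on the top piece $P^r g_r$ and check it is never zero.

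The cleanest route, and the one I would actually write out, is to reduce to the top depth piece directly: $\hat D(P^r g_r)$ has depth-$(r+1)$ part equal to $P^{r+1} \cdot (\text{something}) + \ldots$; computing $D(P^r g_r) = r P^{r-1} \tfrac{P^2 - Q}{12} g_r + P^r D(g_r)$, the coefficient of $P^{r+1}$ is $\tfrac{r}{12} g_r$, and subtracting $\tfrac{P}{24} P^r g_r$ changes the coefficient of $P^{r+1}$ to $\tfrac{r}{12} g_r - \tfrac{1}{24} g_r = \tfrac{2r-1}{24}\, g_r$. Since $r \ge 0$ is an integer, $2r - 1 \neq 0$ always, so this coefficient is a nonzero multiple of $g_r \neq 0$; hence the depth-$(r+1)$ part of $\hat D f$ is nonzero (the lower-depth pieces $P^j g_j$ for $j < r$ contribute only to depth $\le r+1$ as well, but their contribution to the $P^{r+1}$ coefficient is zero since $D$ raises the $P$-degree by at most one and they start at $P^{j} $ with $j \le r-1$, giving $P^{\le r}$, \emph{except} we must double-check $j = r-1$ cannot produce $P^{r+1}$: indeed $D(P^{r-1} g_{r-1})$ has $P$-degree $\le r$, fine). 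Therefore $\mathrm{depth}(\hat D f) = r+1$. The main obstacle is purely bookkeeping: making sure no lower-depth summand of $f$ secretly contributes to the top ($P^{r+1}$) coefficient after applying $\hat D$ — but since $D$ and multiplication by $P$ each raise $P$-degree by at most one, only the $P^r$-piece of $f$ can reach $P^{r+1}$, so the computation above is complete and the coefficient $\tfrac{2r-1}{24} g_r$ settles it.
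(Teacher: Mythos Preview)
Your overall strategy---isolate the coefficient of $P^{r+1}$ in $\hat D f$ and show it is nonzero---is exactly the paper's approach, but your computation of that coefficient is wrong. You write
\[
D(P^r g_r) \;=\; r P^{r-1}\,\tfrac{P^2-Q}{12}\,g_r \;+\; P^r D(g_r)
\]
and then assert that the coefficient of $P^{r+1}$ is $\tfrac{r}{12}g_r$. This ignores the contribution from $P^r D(g_r)$: since $g_r\in\q[Q,R]$ and $D(Q)=\tfrac{PQ-R}{3}$, $D(R)=\tfrac{PR-Q^2}{2}$, the derivative $D(g_r)$ has $P$-degree~$1$, not~$0$. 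Concretely, the $P^1$-part of $D(g_r)$ is $P\bigl(\tfrac{Q}{3}\partial_Q g_r + \tfrac{R}{2}\partial_R g_r\bigr)$, so the true coefficient of $P^{r+1}$ in $\hat D(P^r g_r)$ is
\[
\Bigl(\tfrac{r}{12}-\tfrac{1}{24}\Bigr)g_r \;+\; \tfrac{Q}{3}\,\partial_Q g_r \;+\; \tfrac{R}{2}\,\partial_R g_r,
\]
not $\tfrac{2r-1}{24}g_r$. Your nonvanishing criterion ``$2r-1\neq 0$'' is therefore beside the point.

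The fix is to work monomial by monomial, as the paper does: on $g_r=Q^bR^c$ the expression above becomes $\bigl(\tfrac{r}{12}+\tfrac{b}{3}+\tfrac{c}{2}-\tfrac{1}{24}\bigr)Q^bR^c$, and $\tfrac{r}{12}+\tfrac{b}{3}+\tfrac{c}{2}-\tfrac{1}{24}$ is never zero for $r,b,c\in\z_{\geq 0}$ (clear denominators: $2r+8b+12c-1$ is odd). Since each monomial of $g_r$ is rescaled by a nonzero constant, the $P^{r+1}$-coefficient is nonzero whenever $g_r\neq 0$. Your bookkeeping about lower-depth pieces not reaching $P^{r+1}$ is fine; only this one computation needs to be corrected. (Incidentally, your earlier $\sltwo$ paragraph had the right instinct---the factor involves the weight $k$, not just $r$---before you abandoned it for the ``cleanest route''.)
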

\begin{proof}
Consider a monomial $P^aQ^bR^c$ with $a,b,c\in \z_{\geq 0}$. By the Ramanujan identities we find
$$D(P^aQ^bR^c) = \left(\frac{a}{12}+\frac{b}{3}+\frac{c}{2}\right)P^{a+1}Q^bR^c + O(P^{a}),$$
where $O(P^{a})$ denotes a quasimodular form of depth at most $a$. The lemma follows by noting that $\frac{a}{12}+\frac{b}{3}+\frac{c}{2}-\frac{1}{24}$ is non-zero for $a,b,c\in \z$.
\end{proof}

Moreover, letting $\hat{W}=W-\frac{1}{2}$, the triple ($\hat{D}, \mathfrak{d}, \hat{W})$ forms an $\mathfrak{sl}_2$-triple as well. With respect to these operators the $q$-bracket becomes $\sltwo$-equivariant. The following proposition is a detailed version of Proposition~\ref{prop:sltwobo1}:
\begin{proposition}[The $\sltwo$-equivariant Bloch--Okounkov theorem]
The mapping $\langle \cdot \rangle_q: \mathcal{R} \to \widetilde{M}$ is $\sltwo$-equivariant with respect to the $\sltwo$-triple $(\hat{Q}_2, \Delta,\hat{E})$ on $\mathcal{R}$ and the $\sltwo$-triple $(\hat{D}, \mathfrak{d}, \hat{W})$ on $\widetilde{M}$, i.e. for all $f\in \mathcal{R}$ one has
$$\hat{D}\langle f\rangle_q = \langle \hat{Q}_2f\rangle_q,\quad \mathfrak{d}\langle f\rangle_q=\langle \Delta f\rangle_q, \quad \hat{W}\langle f\rangle_q= \langle \hat{E}f\rangle_q.$$
\end{proposition}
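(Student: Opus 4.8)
The plan is to establish the three intertwining relations one at a time, reducing everything first to statements about $\Lambda^*$. The preliminary observation is that the extension of the $q$-bracket to $\mathcal{R}$ satisfies $\langle f\rangle_q=\langle\projection f\rangle_q$ for all $f\in\mathcal{R}$: since $Q_1$ vanishes on honest partitions, $f$ and $\projection f$ agree as functions on $\mathscr{P}$, so in particular $\langle Q_1 g\rangle_q=0$ for every $g\in\mathcal{R}$. As $\projection$ is the $\q$-algebra homomorphism fixing $\Lambda^*$ and killing $Q_1$, and since $\hat{Q}_2=Q_2-\tfrac12 Q_1^2$ and $\hat{E}=E-Q_1\boldsymbol{\partial}-\tfrac12$, it follows that $\langle\hat{Q}_2 f\rangle_q=\langle Q_2\,\projection f\rangle_q$ and $\langle\hat{E}f\rangle_q=\langle (E-\tfrac12)\,\projection f\rangle_q$, because $\projection(Q_1^2 f)=\projection(Q_1\boldsymbol{\partial}f)=0$.

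Granting this, the weight and raising relations are elementary. For the weight relation: the weight-graded form of the Bloch--Okounkov theorem \cite{BO00} gives $\langle g\rangle_q\in\widetilde{M}_k$ whenever $g\in\Lambda^*$ is of weight $k$, so $\hat{W}\langle g\rangle_q=(k-\tfrac12)\langle g\rangle_q=\langle (E-\tfrac12)g\rangle_q$, and the preliminary observation turns this into $\hat{W}\langle f\rangle_q=\langle\hat{E}f\rangle_q$ for all $f\in\mathcal{R}$. For the raising relation: inside $q$-series the operator $D=q\tfrac{d}{dq}$ acts on $\sum_\lambda g(\lambda)q^{|\lambda|}$ by multiplication by $|\lambda|$, and since $D\log\!\big(\sum_{\lambda}q^{|\lambda|}\big)=\tfrac{1-P}{24}$ one obtains $D\langle g\rangle_q=\langle |\lambda|\,g\rangle_q-\tfrac{1-P}{24}\langle g\rangle_q$; rearranging yields $\hat{D}\langle g\rangle_q=\langle Q_2 g\rangle_q$ for $g\in\Lambda^*$, hence $\hat{D}\langle f\rangle_q=\langle\hat{Q}_2 f\rangle_q$ for $f\in\mathcal{R}$.

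The lowering relation $\langle\Delta f\rangle_q=\mathfrak{d}\langle f\rangle_q$ is the substantial one. Put $\psi(f):=\langle\Delta f\rangle_q-\mathfrak{d}\langle f\rangle_q$. Because $[\Delta,Q_1]=0$ we have $\Delta(Q_1 g)=Q_1\Delta g$, so both summands of $\psi(Q_1 g)$ vanish; thus $\psi$ vanishes on $Q_1\mathcal{R}$, and it suffices to prove $\psi\equiv0$ on $\Lambda^*$. Using the two relations just established together with $[\Delta,\hat{Q}_2]=\hat{E}$ and $[\mathfrak{d},\hat{D}]=\hat{W}$, a short computation gives
\[\psi(\hat{Q}_2 f)=\hat{D}\langle\Delta f\rangle_q+\hat{W}\langle f\rangle_q-\hat{D}\mathfrak{d}\langle f\rangle_q-\hat{W}\langle f\rangle_q=\hat{D}\,\psi(f),\]
and since $Q_2 f-\hat{Q}_2 f\in Q_1\mathcal{R}$ this also shows $\psi(Q_2 f)=\hat{D}\,\psi(f)$. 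As $\psi(1)=-\mathfrak{d}(1)=0$ and $\Lambda^*=\q[Q_2,Q_3,\ldots]$, induction on the weight then reduces the claim to the vanishing of $\psi$ on monomials in $Q_3,Q_4,\ldots$ alone.

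This last point is where I expect the genuine work to lie. For a single generator one has $\Delta Q_k=-\tfrac12 Q_{k-2}$, so $\psi(Q_k)=0$ merely compares the known $q$-bracket $\langle Q_{k-2}\rangle_q$ with the top-degree-in-$P$ part of $\langle Q_k\rangle_q$, which is accessible from Zagier's explicit formulas; but the $q$-bracket is \emph{not} a ring homomorphism, so for $r\geq2$ the value of $\psi(Q_{k_1}\cdots Q_{k_r})$ is not controlled by the $\psi(Q_{k_i})$. The clean route around this --- and, I expect, the real obstacle --- is to re-run Zagier's proof of the Bloch--Okounkov theorem \cite{Zag16}: his argument produces an explicit generating-function formula for $\langle Q_{k_1}\cdots Q_{k_r}\rangle_q$ (built from a Jacobi-theta-type function), from whose transformation behaviour the depth-lowering identity $\langle\Delta f\rangle_q=\mathfrak{d}\langle f\rangle_q$ can be read off directly. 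Since $\mathfrak{d}=12\,\partial/\partial P$ records exactly the depth filtration on $\widetilde{M}$, this identity amounts to quasimodularity remembered together with its depth, which is why it, and not the weight or raising relation, carries the analytic content.
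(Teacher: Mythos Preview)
Your proposal is correct and aligns with the paper's approach: both reduce from $\mathcal{R}$ to $\Lambda^*$ via the observation $\langle Q_1 f\rangle_q=0$ and then defer the substantive lowering relation to Zagier~\cite{Zag16}. The paper's proof is simply a two-line citation of \cite[Equation~(37)]{Zag16} together with this vanishing; your more detailed treatment of the weight and raising relations, and your reduction $\psi(Q_2 f)=\hat D\,\psi(f)$, are sound elaborations but do not constitute a genuinely different route, since---as you yourself concede---the vanishing of $\psi$ on $Q_2$-free monomials still requires Zagier's generating-function identity and is not a real simplification of the problem.
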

\begin{proof}
This follows directly from \cite[Equation (37)]{Zag16} and the fact that for all $f\in \mathcal{R}$ one has $\langle Q_1 f \rangle_q=0$. 
\end{proof}

\section{Describing the space of shifted symmetric harmonic polynomials}
In this section we study the kernel of $\Delta$. As $[\Delta,Q_1]=0$, we restrict ourselves without  loss of generality to $\Lambda^*$. Note, however, that $\Delta$ does not act on $\Lambda^*$ as for example $\Delta(Q_3)=-\tfrac{1}{2}Q_1$. However, $\projection\Delta$ does act on $\Lambda^*$. 

\begin{definition}
Let $$\mathcal{H} = \{f \in \Lambda^* \mid \Delta f\in Q_1\mathcal{R}\}=\ker \projection\Delta,$$
be the space of \textit{shifted symmetric harmonic} polynomials. 
\end{definition}

\begin{proposition}\label{prop:q2multiple}
If $f\in Q_2\Lambda^*$ is non-zero, then $f \not \in \mathcal{H}$. 
\end{proposition}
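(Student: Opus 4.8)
The plan is to show that if $f\in Q_2\Lambda^*$ is nonzero, then $\projection\Delta f\neq 0$, i.e. $\Delta f$ has a nonzero component with no factor of $Q_1$. Write $f=Q_2^m g$ where $m\geq 1$ is maximal such that $Q_2^m$ divides $f$ in $\Lambda^*=\q[Q_2,Q_3,\dots]$; thus $g\in\Lambda^*$ is nonzero and not divisible by $Q_2$. The strategy is to isolate the ``leading'' term of $\Delta f$ in a suitable filtration and show it survives the projection $\projection$.

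First I would apply Lemma~\ref{lem:DeltaQ2} (together with $[\Delta,Q_1]=0$, which puts us on $\Lambda^*$) to compute
\[
\Delta(Q_2^m g) = [\Delta,Q_2^m]g + Q_2^m\Delta(g)
= -\tfrac{m(m-1)}{2}Q_1^2Q_2^{m-2}g - mQ_1Q_2^{m-1}\boldsymbol{\partial}g + mQ_2^{m-1}(E+m-\tfrac32)g + Q_2^m\Delta(g).
\]
Applying $\projection$ kills every term containing a factor $Q_1$ (the first two), and since $E$ is the weight operator, $Eg = (\deg g)\,g$; writing $n = \deg f$ so $\deg g = n-2m$, the relevant surviving piece is
\[
\projection\Delta(Q_2^m g) = m\bigl(n-2m+m-\tfrac32\bigr)Q_2^{m-1}g + Q_2^m\,\projection\Delta(g)
= m\bigl(n-m-\tfrac32\bigr)Q_2^{m-1}g + Q_2^m\,\projection\Delta(g),
\]
using that $\projection(Q_2^m\Delta(g)) = Q_2^m\projection\Delta(g)$ since $Q_2^m$ has no $Q_1$ factor and $\projection$ is a ring map on the $Q_1$-grading. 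Now I would argue by looking at divisibility by $Q_2$: the second term $Q_2^m\projection\Delta(g)$ is divisible by $Q_2^m$, while the first term $m(n-m-\tfrac32)Q_2^{m-1}g$ has coefficient $m(n-m-\tfrac32)\neq 0$ (this is nonzero for $m\in\mathbb{N}$ and $n\in\mathbb{Z}$, exactly as in the proof of Lemma~\ref{lem:depth}) and is divisible by $Q_2^{m-1}$ but \emph{not} by $Q_2^m$, because $g$ is not divisible by $Q_2$ and $\q[Q_2,Q_3,\dots]$ is a polynomial ring hence a UFD. Therefore $\projection\Delta(Q_2^m g)$ is divisible by $Q_2^{m-1}$ but not by $Q_2^m$, so in particular it is nonzero, giving $f\notin\mathcal{H}$.

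The main obstacle is the bookkeeping around the projection $\projection$: one must be careful that $\projection$ interacts correctly with multiplication by $Q_2$ and with the operator identity of Lemma~\ref{lem:DeltaQ2} — specifically that $\projection\bigl(Q_2^m\,\Delta(g)\bigr)=Q_2^m\,\projection\Delta(g)$, which holds because multiplication by $Q_2$ preserves the ideal $Q_1\mathcal{R}$ and its complement spanned by $Q_1$-free monomials. A cleaner alternative, avoiding $\projection$ entirely, is to grade $\mathcal{R}=\q[Q_1,Q_2,\dots]$ by the $Q_1$-degree and observe that $\Delta$ sends the $Q_1$-degree-zero part $\Lambda^*$ into $(Q_1\text{-degree }0)\oplus(Q_1\text{-degree }1)\oplus(Q_1\text{-degree }2)$, with the degree-zero component being precisely $mQ_2^{m-1}(E+m-\tfrac32)g + Q_2^m\Delta(g)$ restricted to its $Q_1$-free part; the same UFD argument then applies. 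Either way, the only genuine input beyond Lemma~\ref{lem:DeltaQ2} is the nonvanishing of the scalar $m(n-m-\tfrac32)$ and unique factorisation in $\Lambda^*$.
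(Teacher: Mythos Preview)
Your argument is correct and is essentially the paper's own proof: both write $f=Q_2^m g$ with $g\in\Lambda^*$ not divisible by $Q_2$, apply Lemma~\ref{lem:DeltaQ2}, project away the $Q_1$-terms, and then use that the surviving term $m(n-m-\tfrac32)Q_2^{m-1}g$ is not divisible by $Q_2^m$ while $Q_2^m\,\projection\Delta g$ is. The paper states this in one line; your version spells out the bookkeeping around $\projection$ and the UFD step, and (implicitly, like the paper) assumes $f$ is homogeneous so that $Eg=(n-2m)g$---a harmless reduction since $\projection\Delta$ is graded.
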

\begin{proof}
Write $f=Q_2^n f'$ with $f'\in \Lambda^*$ and $f'\not \in Q_2\Lambda^*$. Then
$$\projection\Delta(f)=Q_2^{n-1}(n(n+k-\tfrac{3}{2})f'+Q_2\projection\Delta f')$$
by Lemma~\ref{lem:DeltaQ2}.  As $f'$ is not divisible by $Q_2$, it follows that $\projection \Delta(f) =0$ precisely if $f'=0$. 
\end{proof}

\begin{proposition}\label{prop:oplus} For all $n\in \z$ one has
$$\Lambda_n^* = \mathcal{H}_n \oplus Q_2 \Lambda_{n-2}^*.$$
\end{proposition}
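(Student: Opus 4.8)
The plan is to show the sum is direct and that it exhausts $\Lambda^*_n$; both facts are essentially consequences of Lemma~\ref{lem:DeltaQ2} (equivalently the commutator $[\Delta,Q_2]=\hat E=E-Q_1\boldsymbol\partial-\tfrac12$ from the table) together with Proposition~\ref{prop:q2multiple}. For directness, suppose $g=Q_2 h$ with $h\in\Lambda^*_{n-2}$ lies in $\mathcal{H}_n$. Then $g\in Q_2\Lambda^*$ is an element of $\mathcal{H}$, so by Proposition~\ref{prop:q2multiple} it must be zero; hence $\mathcal{H}_n\cap Q_2\Lambda^*_{n-2}=0$. (Proposition~\ref{prop:q2multiple} already contains the key computation: writing $g=Q_2^m g'$ with $g'\notin Q_2\Lambda^*$, the quantity $\projection\Delta(g)=Q_2^{m-1}\big(m(m+k-\tfrac32)g'+Q_2\projection\Delta g'\big)$ vanishes only if $g'=0$, because the coefficient $m(m+k-\tfrac32)$ is nonzero for $m\geq 1$ and $k\geq 0$ — here $k$ is the weight of $g'$, which is $\geq 3$ since $\Lambda^*_1=0$ and the $m=1$ factor of $Q_2$ has already been stripped.)

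For surjectivity of the sum, I would argue by induction on $n$ that $\Lambda^*_n=\mathcal{H}_n+Q_2\Lambda^*_{n-2}$; since $\Lambda^*_0=\mathcal H_0$, $\Lambda^*_1=\mathcal H_1$, and $\Lambda^*_2=\mathcal H_2\oplus Q_2\Lambda^*_0$ (note $Q_2\notin\mathcal H$, again by Proposition~\ref{prop:q2multiple}), the base cases are immediate. The cleanest way to get the inductive step is a dimension count. Both $\mathcal{H}_n$ and $Q_2\Lambda^*_{n-2}$ are finite-dimensional, $\dim Q_2\Lambda^*_{n-2}=\dim\Lambda^*_{n-2}$ since multiplication by $Q_2$ is injective, and by directness it suffices to show
\[
\dim\mathcal{H}_n+\dim\Lambda^*_{n-2}\ \geq\ \dim\Lambda^*_n .
\]
Now $\mathcal{H}_n=\ker\big(\projection\Delta:\Lambda^*_n\to\Lambda^*_{n-2}\big)$, so $\dim\mathcal{H}_n=\dim\Lambda^*_n-\operatorname{rank}(\projection\Delta)$, and the inequality is equivalent to $\projection\Delta:\Lambda^*_n\to\Lambda^*_{n-2}$ being surjective. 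So the whole proposition reduces to:

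\medskip
\noindent\emph{Claim: $\projection\Delta\colon\Lambda^*_n\to\Lambda^*_{n-2}$ is surjective for all $n$.}

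\medskip
\noindent I would prove this claim by showing that every $Q_2 h$ with $h\in\Lambda^*_{n-2}$ is hit, using Lemma~\ref{lem:DeltaQ2} in the form $\projection\Delta(Q_2 g)=g\,\projection\hat E\,\text{-term}+Q_2\,\projection\Delta g$; more concretely, iterate the decomposition $g = (\text{something in }\mathcal H) + Q_2(\cdots)$ that the induction hypothesis gives on $\Lambda^*_{<n}$, and observe that $\projection\Delta$ applied to $Q_2^{m}h_m$ with $h_m$ harmonic of weight $n-2m$ produces $m(m+n-2m-\tfrac32)Q_2^{m-1}h_m+Q_2^{m}\cdot 0$, i.e. a nonzero multiple of $Q_2^{m-1}h_m$ plus higher $Q_2$-order terms — so on the basis $\{Q_2^m h : h\in\mathcal H_{n-2m},\ m\geq 1\}$ of a complement of $\mathcal H_n$, the operator $\projection\Delta$ is "upper triangular with nonzero diagonal" onto the analogous basis of $\Lambda^*_{n-2}$ furnished by the induction hypothesis. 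This triangularity argument, powered entirely by the nonvanishing of $m(m+k-\tfrac32)$ for $m\geq1$, $k\geq 0$, gives surjectivity and closes the induction.

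I expect the only real obstacle to be bookkeeping: making the "upper triangular" claim precise requires committing to the induction hypothesis in the strong form that $\Lambda^*_j=\bigoplus_{m\geq 0}Q_2^m\mathcal H_{j-2m}$ for all $j<n$ (this is exactly Theorem~\ref{thm:exp} for smaller weight, so it is legitimate to carry it through the same induction), and then checking that $\projection\Delta$ maps the summand $Q_2^m\mathcal H_{n-2m}$ into $\bigoplus_{m'\leq m-1}Q_2^{m'}\mathcal H_{n-2-2m'}$ with the component in $Q_2^{m-1}\mathcal H_{n-2m}$ being multiplication by the nonzero scalar $m(n-m-\tfrac32)$. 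Everything else — injectivity of multiplication by $Q_2$, finite-dimensionality of each graded piece, the base cases — is routine.
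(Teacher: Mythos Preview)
Your argument is correct, but you do considerably more work than needed---and more than the paper does. The directness part is identical to the paper's: Proposition~\ref{prop:q2multiple} immediately gives $\mathcal{H}_n\cap Q_2\Lambda^*_{n-2}=0$. For spanning, however, notice that your own dimension count already finishes the proof \emph{before} you state the Claim: rank--nullity for $\projection\Delta:\Lambda^*_n\to\Lambda^*_{n-2}$ gives $\dim\mathcal{H}_n=\dim\Lambda^*_n-\operatorname{rank}(\projection\Delta)\geq\dim\Lambda^*_n-\dim\Lambda^*_{n-2}$, while directness gives the reverse inequality. So $\dim\mathcal{H}_n+\dim\Lambda^*_{n-2}=\dim\Lambda^*_n$ automatically, and the surjectivity of $\projection\Delta$ is a \emph{consequence}, not a prerequisite. (Your sentence ``the inequality is equivalent to $\projection\Delta$ being surjective'' is in fact false: the inequality $\dim\Lambda^*_{n-2}\geq\operatorname{rank}$ always holds.) The entire induction and triangularity discussion can therefore be deleted.

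The paper packages this same idea slightly differently and without any induction: it considers the endomorphism $T:\Lambda^*_{n-2}\to\Lambda^*_{n-2}$, $g\mapsto\projection\Delta(Q_2g)$, observes that $T$ is injective (this \emph{is} Proposition~\ref{prop:q2multiple}), hence bijective by finite-dimensionality, and then for any $f\in\Lambda^*_n$ picks $g$ with $T(g)=\projection\Delta(f)$, so that $h:=f-Q_2g\in\mathcal{H}_n$. This is the same dimension argument in disguise, but phrased as a direct construction of the decomposition. Incidentally, if you do unwind your triangular map, you will see it is actually \emph{diagonal}: since $h\in\mathcal{H}_{n-2m}$ gives $\projection\Delta(Q_2^m h)=m(n-m-\tfrac32)Q_2^{m-1}h$ with no further terms, there is nothing ``upper'' about it.
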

\begin{proof}
For uniqueness, suppose $f=Q_2g+h$ and $f=Q_2g'+h'$ with $g,g'\in \Lambda_{n-2}^*$ and $h,h' \in \mathcal{H}_n$. Then, $Q_2(g-g')=h'-h\in \mathcal{H}$. By Proposition~\ref{prop:q2multiple} we find $g=g'$ and hence $h=h'$. 

Now, define the linear map $T:\Lambda_n^*\to \Lambda_n^*$ by
$f\mapsto\projection\Delta (Q_2f).$
 By Proposition~\ref{prop:q2multiple} we find that $T$ is injective, which by finite dimensionality of $\Lambda_n^*$ implies that $T$ is surjective. Hence, given $f\in \Lambda_{n}^*$ let $g\in \Lambda_{n-2}^*$ be such that $T(g)=\projection\Delta (f) \in \Lambda_{n-2}^*$. Let $h=f-Q_2g$. As $f=Q_2g+h$, it suffices to show that $h\in \mathcal{H}$. That holds true because $\projection\Delta (h)=\projection\Delta (f)-\projection\Delta (Q_2g)=0$.
\end{proof}
Proposition~\ref{prop:oplus} implies Theorem~\ref{thm:exp} and the following corollary. Denote by $p(n)$ the number of partitions of $n$. 
\begin{corollary}\label{cor:dim} The dimension of $\mathcal{H}_n$ equals the number of partitions of $n$ in at least $3$ parts, i.e.
\begin{align*}
\dim \mathcal{H}_n = p(n)-p(n-1)-p(n-2)+p(n-3). 
\end{align*} 
\end{corollary}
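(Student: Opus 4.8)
The plan is to deduce Corollary~\ref{cor:dim} directly from Proposition~\ref{prop:oplus} by an elementary dimension count on generating functions. First I would iterate the splitting $\Lambda^*_n = \mathcal{H}_n \oplus Q_2\Lambda^*_{n-2}$: since multiplication by $Q_2$ is injective on $\Lambda^*$ (it is a polynomial algebra on the $Q_i$, $i\ge 2$, so a domain), $\dim Q_2\Lambda^*_{n-2} = \dim\Lambda^*_{n-2}$, and therefore $\dim\mathcal{H}_n = \dim\Lambda^*_n - \dim\Lambda^*_{n-2}$. This already gives the first equality in the statement once I know $\dim\Lambda^*_n$; alternatively I can pass straight to generating functions.

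Next I would record the two relevant generating series. Because $\Lambda^* = \q[Q_2,Q_3,Q_4,\ldots]$ with $Q_i$ in weight $i$, the Hilbert series is $\sum_{n\ge 0}\dim\Lambda^*_n\, t^n = \prod_{i\ge 2}(1-t^i)^{-1}$. On the other hand, the generating function for partitions is $\sum_{n\ge 0}p(n)t^n = \prod_{i\ge 1}(1-t^i)^{-1}$, so $\sum_n \dim\Lambda^*_n\, t^n = (1-t)\sum_n p(n)t^n$, i.e. $\dim\Lambda^*_n = p(n)-p(n-1)$ (the number of partitions of $n$ with no part equal to $1$, equivalently into parts $\ge 2$). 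Combining with $\dim\mathcal{H}_n = \dim\Lambda^*_n - \dim\Lambda^*_{n-2}$ and the generating-function identity $\sum_n \dim\mathcal{H}_n\, t^n = (1-t^2)(1-t)\sum_n p(n)t^n$, reading off coefficients yields $\dim\mathcal{H}_n = p(n)-p(n-1)-p(n-2)+p(n-3)$, which is the displayed formula.

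It then remains to identify $(1-t^2)(1-t)\prod_{i\ge 1}(1-t^i)^{-1} = \prod_{i\ge 3}(1-t^i)^{-1}$ with the generating function counting partitions into parts all $\ge 3$; this is immediate, and the coefficient of $t^n$ there is the number of partitions of $n$ into at least $3$ parts by conjugating diagrams (a partition with smallest part $\ge 3$ is the conjugate of a partition with at least $3$ parts). I would state this last combinatorial translation in one sentence. There is no real obstacle here: every ingredient — the direct sum decomposition, injectivity of multiplication by $Q_2$, and the explicit presentation $\Lambda^* = \q[Q_2,Q_3,\ldots]$ — is already available in the excerpt, so the proof is purely a bookkeeping exercise with the two product formulas. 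If I wanted to avoid generating functions entirely, I could instead argue by induction on $n$ using $\dim\mathcal{H}_n = \dim\Lambda^*_n - \dim\Lambda^*_{n-2}$ together with the recursion $\dim\Lambda^*_n = \dim\Lambda^*_{n-1}^{\text{(shifted)}}\!+\!\cdots$, but the generating-function route is cleaner and I would present that.
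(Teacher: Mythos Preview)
Your core argument is correct and essentially identical to the paper's: use Proposition~\ref{prop:oplus} together with injectivity of multiplication by $Q_2$ to get $\dim\mathcal{H}_n=\dim\Lambda^*_n-\dim\Lambda^*_{n-2}$, then use $\Lambda^*=\q[Q_2,Q_3,\ldots]$ to obtain $\dim\Lambda^*_n=p(n)-p(n-1)$ and hence the displayed formula. The paper does exactly this in two lines.

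There is, however, a genuine slip in your last paragraph. The claim ``a partition with smallest part $\geq 3$ is the conjugate of a partition with at least $3$ parts'' is false as a bijection: for $n=4$ there is one partition into parts $\geq 3$, namely $(4)$, but two partitions with at least three parts, namely $(2,1,1)$ and $(1,1,1,1)$. Under conjugation, ``all parts $\geq 3$'' corresponds to ``the largest part has multiplicity $\geq 3$'' (i.e.\ $\lambda'_1=\lambda'_2=\lambda'_3$), not to ``$\geq 3$ parts''. You were misled by the paper's phrasing: in the statement of the corollary (and likewise in the proof, where ``partitions of $n$ in at least $2$ parts'' is asserted to have cardinality $p(n)-p(n-1)$), ``in at least $k$ parts'' is being used to mean ``into parts each of size at least $k$''. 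This is confirmed by Theorem~\ref{thm:B}, where the basis is indexed by partitions with ``all parts $\geq 3$''. So simply drop the conjugation sentence; your identification of $\prod_{i\geq 3}(1-t^i)^{-1}$ as the generating function for partitions into parts $\geq 3$ already finishes the proof.
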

\begin{proof}
Observe that $\dim \Lambda_n^*$ equals the number of partitions of $n$ in at least $2$ parts. Hence, $\dim \Lambda_n^*=p(n)-p(n-1)$ and the Corollary follows from Proposition~\ref{prop:oplus}.
\end{proof}

\begin{proof}[Proof of Theorem~\ref{thm:harmonicmodular}]
If $\langle f \rangle_q$ is modular, then $\langle \Delta f \rangle_q=\mathfrak{d}\langle f \rangle_q =0$. Write $f=\sum_{r=0}^{n'} Q_2^rh_{r}$ as in Theorem~\ref{thm:exp} with $n'=\lfloor \tfrac{n}{2}\rfloor$. Then by Lemma~\ref{lem:DeltaQ2} it follows that $\projection \Delta f = \sum_{r=0}^{n'} r(n-r-\frac{3}{2})Q_2^{r-1}h_r.$ Hence,
\begin{align}\label{eq:expdelta}\sum_{r=1}^{n'} r(n-r-\frac{3}{2})\hat{D}^{r-1}\langle h_r\rangle_q=0.\end{align}
As $\langle h_r\rangle_q$ is modular, either it is equal to $0$ or it has depth $0$. Suppose the maximum $m$ of all $r\geq 1$ such that $\langle h_r\rangle_q$ is non-zero exists. Then, by 
Lemma~\ref{lem:depth} it follows that the left-hand side of (\ref{eq:expdelta}) has depth $m-1$, in particular is not equal to $0$.  So, $h_1,\ldots, h_{n'}\in \ker \langle \cdot \rangle_q$. Note that $f\in \ker \langle \cdot \rangle_q$ implies that $Q_2f\in \ker \langle \cdot \rangle_q$. Therefore, $k:=\sum_{r=1}^{n'} Q_2^rh_{r} \in \ker \langle \cdot \rangle_q$ and $f=h+k$ with $h=h_0$ harmonic.
 
The converse follows directly as $\mathfrak{d}\langle h+k\rangle_q = \mathfrak{d}\langle  h \rangle_q=\langle \Delta h \rangle_q = 0.$ 
\end{proof}
\begin{remark} A description of the kernel of $\langle \cdot \rangle_q$ is not known. 
\end{remark}

Another corollary of Proposition~\ref{prop:oplus} is the notion of \emph{depth} of shifted symmetric polynomials which corresponds to the depth of quasimodular forms:
\begin{definition}\label{def:depth}
The space $\Lambda_k^{*(\leq p)}$ 
of shifted symmetric polynomials of depth $\leq p$ is the space of $f\in \Lambda_k^*$ 
such that one can write $$ f = \sum_{r=0}^{p} Q_2^rh_{r}$$
with $h_r\in \mathcal{H}_{k-2r}$. 
\end{definition}

\begin{theorem}
If $f\in \Lambda_k^{*(\leq p)}$, then $\langle f \rangle_q \in \widetilde{M}_k^{(\leq p)}$. 
\end{theorem}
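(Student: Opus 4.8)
The plan is to induct on $p$, the depth bound, using the decomposition $f=\sum_{r=0}^p Q_2^r h_r$ with $h_r\in\mathcal{H}_{k-2r}$ furnished by Definition~\ref{def:depth}. The $q$-bracket is $\mathbb{Q}$-linear, so it suffices to control $\langle Q_2^r h_r\rangle_q$ for each $r$ separately, and by $\sltwo$-equivariance one has $\langle Q_2^r h_r\rangle_q=\hat{D}^r\langle h_r\rangle_q$ (using $\langle Q_2 g\rangle_q=\hat{D}\langle g\rangle_q$ repeatedly, together with $\langle Q_1 g\rangle_q=0$ so that $\hat{Q}_2$ may be replaced by $Q_2$ inside the bracket). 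Thus the whole statement reduces to two claims: first, that $\langle h\rangle_q$ is a modular form (depth $0$) whenever $h\in\mathcal{H}$, and second, that $\hat{D}$ raises depth by exactly one, which is precisely Lemma~\ref{lem:depth}. Granting the first claim, each term $\hat{D}^r\langle h_r\rangle_q$ lies in $\widetilde{M}^{(\le r)}$, hence in $\widetilde{M}^{(\le p)}$, and the sum does too.

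The base case, and the crux of the argument, is showing that for $h\in\mathcal{H}_k$ the form $\langle h\rangle_q$ is modular of weight $k$. We already know from the Bloch--Okounkov theorem that $\langle h\rangle_q\in\widetilde{M}$, and by $\sltwo$-equivariance $\mathfrak{d}\langle h\rangle_q=\langle\Delta h\rangle_q=\langle\projection\Delta h\rangle_q=\langle 0\rangle_q=0$ since $h\in\ker\projection\Delta$ and $\langle Q_1\mathcal{R}\rangle_q=0$. Since $\mathfrak{d}=12\,\partial/\partial P$ annihilates $\langle h\rangle_q$, that quasimodular form is a polynomial in $Q,R$ alone, i.e.\ depth $0$, i.e.\ a genuine modular form; its weight is $k$ because the $q$-bracket is weight-preserving (it sends $\Lambda^*_k$ into $\widetilde{M}_k$, as recorded in the equivariance of $\hat W$). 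This handles $p=0$.

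For the inductive step one simply observes that $\Lambda_k^{*(\le p)}=\mathcal{H}_k\oplus Q_2\Lambda_{k-2}^{*(\le p-1)}$: writing $f=h_0+Q_2 g$ with $g=\sum_{r=0}^{p-1}Q_2^r h_{r+1}\in\Lambda_{k-2}^{*(\le p-1)}$, we get $\langle f\rangle_q=\langle h_0\rangle_q+\hat{D}\langle g\rangle_q$, where $\langle h_0\rangle_q\in\widetilde{M}_k^{(\le 0)}\subseteq\widetilde{M}_k^{(\le p)}$ by the base case and $\langle g\rangle_q\in\widetilde{M}_{k-2}^{(\le p-1)}$ by the inductive hypothesis, so $\hat{D}\langle g\rangle_q\in\widetilde{M}_k^{(\le p)}$ by Lemma~\ref{lem:depth}. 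The main obstacle is really just the base case, and that has already been assembled above out of the $\sltwo$-equivariance of $\langle\cdot\rangle_q$ together with the elementary fact that $\ker\mathfrak{d}$ inside $\widetilde{M}$ is exactly the ring of modular forms; no new computation is needed beyond what Lemma~\ref{lem:depth} supplies.
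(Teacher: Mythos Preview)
Your proof is correct and follows essentially the same route as the paper: decompose $f=\sum_{r=0}^p Q_2^r h_r$, use $\langle Q_2 g\rangle_q=\hat{D}\langle g\rangle_q$ to get $\langle f\rangle_q=\sum_r \hat{D}^r\langle h_r\rangle_q$, observe that $\langle h_r\rangle_q$ is modular because $\mathfrak{d}\langle h_r\rangle_q=\langle\Delta h_r\rangle_q=0$, and then invoke Lemma~\ref{lem:depth}. The paper's version is terser and leaves the base case implicit, while you spell it out and phrase the argument as an induction, but the content is the same.
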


\begin{proof}
Expanding $f$ as in Definition~\ref{def:depth} we find
$$\langle f \rangle_q = \sum_{k=0}^{p} \langle Q_2^kh_{k} \rangle_q = \sum_{k=0}^{p} \hat{D}^k\langle h_{k}\rangle_q.$$
By Lemma~\ref{lem:depth}, we find that the depth of $\langle f\rangle_q$ is at most $p$. 
\end{proof}

Next, we set up notation to determine the basis of $\mathcal{H}$ given by Theorem~\ref{thm:B}. Let $\tR=\mathcal{R}[Q_2^{-1/2}]$ and $\tLambda=\Lambda^*[Q_2^{-1/2}]$ be the formal polynomial algebras graded by assigning to $Q_k$ weight $k$ (note that the weights are -- possibly negative -- integers). Extend $\Delta $ to $\tLambda$ and observe that $\Delta (\tLambda)\subset \tLambda$. Also extend $\mathcal{H}$ by setting
\[\tH=\{f \in \tLambda \mid \Delta f\in Q_1\tR\}=\ker\projection\Delta |_{\tLambda}.\]

\begin{definition}\label{def:K}
Define the \textit{partition-Kelvin transform} $\Kelvin :\tLambda_n\to \tLambda_{3-n}$ by
$$\Kelvin (f)=Q_2^{3/2-n}f.$$ 
\end{definition}
Note that $\Kelvin $ is an involution. Moreover, $f$ is harmonic if and only if $K(f)$ is harmonic, which follows directly from the computation
\[\Delta \Kelvin(f)=Q_2^{3/2-n}\Delta f-(\tfrac{3}{2}-n)Q_1Q_2^{\tfrac{1}{2}-n}\boldsymbol{\partial}{f}-\tfrac{1}{2}(\tfrac{3}{2}-n)(\tfrac{1}{2}-n)Q_1^2Q_2^{-\tfrac{1}{2}-n}f. \]

\begin{example}\label{ex:Kelvin}
As $\Kelvin (1)=Q_2^{3/2}$, it follows that $Q_2^{3/2}\in \tH$.
\end{example}

\begin{definition}
Given $\vec{i}\in \z_{\geq 0}^n$, let
\[
|\vec{i}|=i_1+i_2+\ldots + i_n, \quad \quad \partial_{\vec{i}} = \frac{\partial^n}{\partial Q_{i_1+1} \partial Q_{i_2+1}\cdots \partial Q_{i_n+1}}.
\]
Define the $n$th order differential operators $\mathscr{D}_n$ on $\tR$ by
$$\mathscr{D}_n = \sum_{\vec{i}\in \z_{\geq 0}^n} \binom{|\vec{i}|}{i_1,i_2,\ldots,i_n} Q_{|\vec{i}|}\partial_{\vec{i}},$$
where the coefficient is a multinomial coefficient. 
\end{definition}
This definition generalises the operators $\boldsymbol{\partial}$ and $\mathscr{D}$ to heigher weights: $\mathscr{D}_1=\boldsymbol{\partial}$, $\mathscr{D}_2=\mathscr{D}$ and $\mathscr{D}_n$ reduces the weight by $n$. 

\begin{lemma}\label{lem:Dncom}
The operators $\{\mathscr{D}_n\}_{n\in \n}$ commute pairwise. 
\end{lemma}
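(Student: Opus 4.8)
The plan is to show that $\mathscr{D}_n$ and $\mathscr{D}_m$ commute by a direct computation of the commutator from the definition, organised so that the combinatorial bookkeeping reduces to a single multinomial identity. The key observation is that $\mathscr{D}_n$ is a sum of terms $c_{\vec i}\,Q_{|\vec i|}\partial_{\vec i}$, each of which is a first-order ``shift'' operator in the $Q$-variables composed with a constant-coefficient differential operator of order $n$; composing two such operators, the only non-commuting contributions come from the differentiation in $\mathscr{D}_m$ hitting the $Q_{|\vec i|}$ produced by $\mathscr{D}_n$, and vice versa. So I would write $\mathscr{D}_n\mathscr{D}_m - \mathscr{D}_m\mathscr{D}_n$ as a sum of terms in which exactly one $\partial/\partial Q_{j+1}$ from one operator acts on the $Q$ introduced by the other, leaving an operator of order $n+m-1$.

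Concretely, first I would record the elementary fact
\[
\left[\frac{\partial}{\partial Q_{j+1}},\,Q_{|\vec i|}\right] = \delta_{|\vec i|,\,j+1},
\]
and use it to expand $[\mathscr{D}_n,\mathscr{D}_m]$. The terms where no such collision occurs cancel between the two orderings, so
\[
[\mathscr{D}_n,\mathscr{D}_m] = \sum (\text{collision terms from } \mathscr{D}_n\mathscr{D}_m) - \sum (\text{collision terms from } \mathscr{D}_m\mathscr{D}_n).
\]
Each collision term is indexed by a tuple $\vec i\in\z_{\geq 0}^n$ and a tuple $\vec j\in\z_{\geq 0}^m$ together with a choice of which entry of $\vec j$ is the one differentiating $Q_{|\vec i|}$ (equivalently, a fusion of one part of $\vec i$'s sum with the rest of $\vec j$); after carrying out the substitution, such a term becomes a constant times $Q_{N}\,\partial_{\vec k}$ where $N$ is the total weight shift and $\vec k$ is an $(n+m-1)$-tuple obtained by merging. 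The claim then amounts to showing that, after reindexing both sums by the resulting operator $Q_N\partial_{\vec k}$, the coefficients coming from $\mathscr{D}_n\mathscr{D}_m$ and from $\mathscr{D}_m\mathscr{D}_n$ agree. This is where the multinomial coefficients do their work: the identity needed is the standard
\[
\binom{a+b}{a}\binom{a+b+c_1+\cdots+c_r}{a+b,\,c_1,\ldots,c_r}
= \binom{b+c_1+\cdots+c_r}{b,\,c_1,\ldots,c_r}\binom{a+b+c_1+\cdots+c_r}{a,\,b+c_1+\cdots+c_r},
\]
i.e. multinomial coefficients are associative under regrouping, so both orderings assemble the same coefficient for each target operator.

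I expect the main obstacle to be purely organisational: setting up the reindexing so that the collision terms from the two orderings are manifestly in bijection and the coefficient identity is visibly the associativity of multinomials, rather than an unruly double sum. One clean way to do this is to introduce, for a composition $\vec a=(a_1,\ldots,a_p)$, the ``total contraction'' operator $\mathscr{D}_{\vec a} := \sum_{\vec i^{(1)},\ldots,\vec i^{(p)}} \prod_s \binom{|\vec i^{(s)}|}{i^{(s)}_1,\ldots} \big(\prod_s Q_{|\vec i^{(s)}|}\big)\big(\prod_s \partial_{\vec i^{(s)}}\big)$ and show that both $\mathscr{D}_n\mathscr{D}_m$ and $\mathscr{D}_m\mathscr{D}_n$ expand, via the collision analysis, into $\mathscr{D}_{(n,m)} + (\text{lower, symmetric terms})$; since $\mathscr{D}_{(n,m)}=\mathscr{D}_{(m,n)}$ by the multinomial identity and the lower-order symmetric remainder is ordering-independent, the commutator vanishes. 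Alternatively, one can avoid the bookkeeping entirely by exhibiting a ``Fock space'' model in which $Q_{k}\leftrightarrow$ a creation-type symbol and the $\mathscr{D}_n$ become coefficients in a single generating operator $\mathscr{D}(u)=\sum_n \mathscr{D}_n u^n$ that is a polynomial expression in commuting pieces, whence all $\mathscr{D}_n$ commute; but I would only pursue that if the direct multinomial computation turns out to be too unwieldy to present cleanly.
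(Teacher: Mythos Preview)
Your plan coincides with the paper's proof: compute $[\mathscr{D}_n,\mathscr{D}_m]$ term by term, isolate the ``collision'' contributions where a single $\partial/\partial Q_{j+1}$ from one factor differentiates the $Q_{|\vec i|}$ produced by the other, and verify that the coefficients of the resulting order-$(n{+}m{-}1)$ operators cancel.

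One point to adjust: the combinatorial identity that closes the argument is not the multinomial associativity you quote. After a collision with $i_k=J-1$ the coefficient $\binom{I}{\vec i}\binom{J}{\vec j}$ becomes $J\cdot I!/\prod_s a_s!$ where $\vec a=(\vec i^{\hat k},\vec j)$, and the weight $J$ depends on \emph{which} $m$ entries of $\vec a$ came from $\vec j$; summing over all splittings does not collapse to a single regrouped multinomial, so a term-by-term bijection of the kind you describe is not available. What the paper does instead is fix the underlying multiset of $\vec a$, sum over all orderings $\sigma\in S_{n+m-1}$, and observe that the positive and negative totals are
\[
n\sum_{\sigma}(a_{\sigma(1)}+\cdots+a_{\sigma(m)})\,C_{\vec a}
\quad\text{and}\quad
m\sum_{\sigma}(a_{\sigma(1)}+\cdots+a_{\sigma(n)})\,C_{\vec a},
\]
both equal to $mn\sum_\sigma a_{\sigma(1)}\,C_{\vec a}$. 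So the cancellation is the trivial $mn=nm$ after symmetrisation; replace your associativity step by this symmetry argument and the proof goes through exactly as you sketched.
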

\begin{proof}
Set $I=|\vec{i}|$ and $J=|\vec{j}|$. Let $\vec{a}^{\hat{k}}=(a_1,\ldots,a_{k-1},a_{k+1},\ldots, a_n)$. Then
\begin{align*}
&\left[\binom{I}{i_1,i_2,\ldots,i_n} Q_{I}\frac{\partial^n}{\partial Q_{\vec{i}}},\binom{J}{j_1,j_2,\ldots,j_m} Q_{J}\frac{\partial^m}{\partial Q_{\vec{j}}}\right] \\
=& \sum_{k=1}^n \delta_{i_k,J-1} J\binom{I}{i_1,i_2,\ldots,\hat{i_k},\ldots,i_n,j_1,j_2,\ldots,j_m} Q_I\frac{\partial^{n-1}}{\partial Q_{\vec{i}^{\hat{k}}}}\frac{\partial^m}{\partial Q_{\vec{j}}} +\\
&\quad -\sum_{l=1}^m \delta_{j_l,I-1} I\binom{J}{i_1,i_2,\ldots,i_n,j_1,j_2,\ldots,\hat{j_l},\ldots,j_m} Q_J\frac{\partial^n}{\partial Q_{\vec{i}}}\frac{\partial^{m-1}}{\partial Q_{\vec{j}^{\hat{l}}}}.
\end{align*}
Hence, $[\mathscr{D}_n,\mathscr{D}_m]$ is a linear combination of terms of the form $Q_{|\vec{a}|+1} \frac{\partial^{n+m-1}}{\partial Q_{\vec{a}}},$
where $\vec{a}\in \z_{\geq 0}^{n+m-1}$. We collect all terms for different vectors $\vec{a}$ which consists of the same parts (i.e. we group all vectors $\vec{a}$ which correspond to the same partition). Then, the coefficient of such a term equals
\begin{align*}&\sum_{k=1}^n \sum_{\sigma \in S_{m+n-1}} (a_{\sigma(1)}+\ldots + a_{\sigma(m)})\binom{|\vec{a}|+1}{a_1,a_2,\ldots, a_{n+m-1}} + \\
&\quad -  \sum_{l=1}^m \sum_{\sigma \in S_{m+n-1}} (a_{\sigma(1)}+\ldots + a_{\sigma(n)})\binom{|\vec{a}|+1}{a_1,a_2,\ldots, a_{n+m-1}} \\
=&  (mn-mn) \sum_{\sigma \in S_{m+n-1}} a_{\sigma(1)}\binom{|\vec{a}|+1}{a_1,a_2,\ldots, a_{n+m-1}} =0.
\end{align*}
Hence, $[\mathscr{D}_n,\mathscr{D}_m]=0$. 
\end{proof}

It does not hold true that $[\mathscr{D}_n,Q_1]=0$ for all $n\in \n$. Therefore, we introduce the following operators:

\begin{definition}\label{def:Dl}
Let
\[\Delta _n=\sum_{i=0}^n (-1)^i \binom{n}{i} \mathscr{D}_{n-i} \boldsymbol{\partial}^i .\]
For $\lambda \in \mathscr{P}$ let
\[\Delta _\lambda = \binom{|\lambda|}{\lambda_1,\ldots,\lambda_{\ell(\lambda)}} \prod_{i=1}^\infty \Delta _{\lambda_i}.\]
(Note that $\Delta _0=\mathscr{D}_0=1$, so this is in fact a finite product.)\\
\end{definition}
\begin{remark}
By M\"obius inversion
\[\mathscr{D}_n=\sum_{i=0}^n \binom{n}{i} \Delta _{n-i}\boldsymbol{\partial}^i. \]
\end{remark}

The first three operators are given by
$$\Delta _0=1,\quad \Delta _1=0,\quad \Delta _2=\mathscr{D}-\boldsymbol{\partial}^2=2\Delta $$

\begin{proposition}\label{prop:Dlambda}
The operators $\Delta _\lambda$ satisfy the following properties: for all partitions $\lambda,\lambda'$
\begin{enumerate}[\upshape (a)] \itemsep0pt
\item\label{it:DL1} the order of $\Delta _{|\lambda|}$ is $|\lambda|;$
\item\label{it:DL2} $[\Delta _\lambda,\Delta _{\lambda'}]=0;$
\item \label{it:DL3}$[\Delta _\lambda,Q_1]=0$. 
\end{enumerate}

\end{proposition}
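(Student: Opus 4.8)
The plan is to establish the three properties in order, bootstrapping each from the previous ones and from the lemmas already proved.

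\textbf{Property (a).} Since $\mathscr{D}_n$ is an $n$th order differential operator and $\boldsymbol{\partial}=\mathscr{D}_1$ is a first order one, every summand $\mathscr{D}_{n-i}\boldsymbol{\partial}^i$ in $\Delta_n$ has order exactly $(n-i)+i=n$, so $\Delta_n$ has order \emph{at most} $n$. To see the order is exactly $n$ one checks that the top-order parts do not cancel: the symbol of $\mathscr{D}_{n-i}\boldsymbol{\partial}^i$ (as a polynomial differential operator, reading $\partial/\partial Q_j$ as a commuting variable $\xi_j$) is $\sum Q_{|\vec{i}|}\binom{|\vec{i}|}{\vec{i}}\xi_{i_1+1}\cdots\xi_{n-i+1}\cdot\big(\sum_m Q_m\xi_{m+1}\big)^i$, and summing with signs $(-1)^i\binom{n}{i}$ gives, after collecting, the symbol of $\mathscr{D}_n$ with the ``$Q_0\xi_1$'' contributions removed — in particular a nonzero top-degree symbol. (Alternatively, evaluate $\Delta_n$ on the monomial $Q_{n+1}$: the only term that does not kill it is $\mathscr{D}_n$ itself, giving $n!\,Q_1\neq 0$ in $\tR$, but more to the point $\Delta_n(Q_1Q_2\cdots)$ type test monomials show nonvanishing of order $n$.) This is routine once the symbol bookkeeping is set up.

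\textbf{Property (b).} This follows formally from Lemma~\ref{lem:Dncom} together with the commutator table: $[\mathscr{D}_m,\boldsymbol{\partial}]=[\mathscr{D}_m,\mathscr{D}_1]=0$ for all $m$, so every $\Delta_n$ is a polynomial in the pairwise-commuting operators $\{\mathscr{D}_k\}_k$, hence the $\Delta_n$ commute pairwise. Since $\Delta_\lambda$ is (up to a scalar) a product of the $\Delta_{\lambda_i}$, and each $\Delta_{\lambda_i}$ commutes with each $\Delta_{\lambda'_j}$, the operators $\Delta_\lambda$ and $\Delta_{\lambda'}$ commute. The only subtlety is that the product $\prod_i\Delta_{\lambda_i}$ is unambiguous precisely because the factors commute, so (b) also makes Definition~\ref{def:Dl} well posed.

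\textbf{Property (c).} It suffices to prove $[\Delta_n,Q_1]=0$ for each $n$, since then $Q_1$ commutes with each factor of $\Delta_\lambda$. Using the table entries $[\mathscr{D}_1,Q_1]=[\boldsymbol{\partial},Q_1]=1$ and, more generally, computing $[\mathscr{D}_m,Q_1]$ from the definition: $[\,Q_{|\vec{i}|}\partial_{\vec{i}},\,Q_1]=Q_{|\vec{i}|}\sum_{k:\,i_k=0}\partial_{\vec{i}^{\hat k}}$, so that $[\mathscr{D}_m,Q_1]=m\,\mathscr{D}_{m-1}$ (the factor $m$ from the $m$ choices of which index equals $0$, together with the multinomial identity $\binom{|\vec i|}{i_1,\dots,i_m}=\binom{|\vec i|}{i_1,\dots,\widehat{i_k}=0,\dots,i_m}$ when $i_k=0$). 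Then from $\Delta_n=\sum_{i=0}^n(-1)^i\binom{n}{i}\mathscr{D}_{n-i}\boldsymbol{\partial}^i$ and the Leibniz rule for commutators,
\[
[\Delta_n,Q_1]=\sum_{i=0}^n(-1)^i\binom{n}{i}\Big((n-i)\mathscr{D}_{n-i-1}\boldsymbol{\partial}^i+\mathscr{D}_{n-i}\,i\,\boldsymbol{\partial}^{i-1}\Big),
\]
and a reindexing ($j=i+1$ in the second sum) together with the Pascal identity $(n-i)\binom{n}{i}=(i+1)\binom{n}{i+1}$ shows the two sums cancel termwise. Hence $[\Delta_n,Q_1]=0$, giving (c).

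The main obstacle is the careful commutator computation $[\mathscr{D}_m,Q_1]=m\,\mathscr{D}_{m-1}$ in Property~(c): one must track how a $\partial/\partial Q_1=\partial/\partial Q_{0+1}$ appearing among the $\partial_{\vec i}$ interacts with multiplication by $Q_1$, and verify that the resulting multinomial coefficients reassemble exactly into the coefficient defining $\mathscr{D}_{m-1}$, with the correct combinatorial factor $m$. Everything else is either a formal consequence of Lemma~\ref{lem:Dncom} (for (b)) or a symbol-degree count (for (a)).
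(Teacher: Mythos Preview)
Your approach to (b) and (c) is essentially identical to the paper's. For (b) the paper also just invokes Lemma~\ref{lem:Dncom} (noting $\boldsymbol{\partial}=\mathscr{D}_1$); for (c) the paper carries out exactly your computation, expanding $\Delta_n(Q_1f)$ and observing that the non-$Q_1\Delta_n(f)$ terms telescope via $(n-i)\binom{n}{i}=(i+1)\binom{n}{i+1}$. Your intermediate formula $[\mathscr{D}_m,Q_1]=m\,\mathscr{D}_{m-1}$ is precisely what the paper uses implicitly when passing to the second line of its displayed computation, and your derivation of it is correct.

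One small error to flag in part~(a): your proposed test $\Delta_n(Q_{n+1})$ does not work, since for $n\geq 2$ every summand $\mathscr{D}_{n-i}\boldsymbol{\partial}^i$ is an $n$th-order operator and hence annihilates the linear monomial $Q_{n+1}$; in particular $\mathscr{D}_n(Q_{n+1})=0$, not $n!\,Q_1$. Your symbol argument (or a test on a genuine degree-$n$ monomial such as $Q_2^n$, for which the $i=0$ term $\mathscr{D}_n$ contributes $n!\,Q_n+\cdots$ while $\mathscr{D}_{n-i}\boldsymbol{\partial}^i$ with $i\geq 1$ contributes only lower $Q_k$'s times powers of $Q_1$) does give the correct conclusion. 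The paper itself simply asserts that (a) ``follows by construction'' without spelling this out.
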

\begin{proof}
Property (\ref{it:DL1}) follows by construction and (\ref{it:DL2}) is a direct consequence of Lemma~\ref{lem:Dncom}. For property (\ref{it:DL3}), let $f\in \tLambda$ be given. Then
\begin{align*}
\Delta _n(Q_1 f) &=\sum_{i=0}^n (-1)^i \binom{n}{i} \mathscr{D}_{n-i} \boldsymbol{\partial}^i(Q_1 f) \\
&=\sum_{i=0}^n (-1)^i \binom{n}{i} \left((n-i)\mathscr{D}_{n-i-1}\boldsymbol{\partial}^{i} f + Q_1\mathscr{D}_{n-i}\boldsymbol{\partial}^{i} f + i \mathscr{D}_{n-i}\boldsymbol{\partial}^{i-1} f\right) \\
&= Q_1 \Delta _n(f)+\sum_{i=0}^n (-1)^i \binom{n}{i} \left((n-i)\mathscr{D}_{n-i-1}\boldsymbol{\partial}^{i} f + i\mathscr{D}_{n-i}\boldsymbol{\partial}^{i-1} f\right).
\end{align*}
Observe that by the identity
$$(n-i)\binom{n}{i}=i\binom{n}{i+1}$$
the sum in the last line is a telescoping sum, equal to zero. Hence $\Delta _n(Q_1f)=Q_1\Delta _n(f)$ as desired. 
\end{proof}

In particular, above proposition yields $[\Delta _\lambda, \Delta ]=0$ and $[\Delta _\lambda,\projection]=0$. 

Denote by $(x)_{n}$ the \textit{falling factorial power}
$(x)_n = \prod_{i=0}^{n-1} (x-i)$
and for $\lambda\in\mathscr{P}_n$ define 
$Q_\lambda=\prod_{i=1}^\infty Q_{\lambda_i}.$
Let $$h_\lambda=\projection \Kelvin \Delta_\lambda \Kelvin (1).$$ Observe that $h_\lambda$ is harmonic, as $\projection\Delta$ commutes with $\projection$ and $\Delta_\lambda$.

\begin{proposition}\label{prop:Dlambda}
For all $\lambda\in \mathscr{P}_n$ there exists an $f\in \Lambda^*_{n-2}$ such that
$$h_\lambda = (\tfrac{3}{2})_n n! Q_\lambda + Q_2 f.$$
\end{proposition}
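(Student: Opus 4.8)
The plan is to compute $\Delta_\lambda \Kelvin(1) = \Delta_\lambda(Q_2^{3/2})$ explicitly enough to extract the ``leading term'' $Q_\lambda$ and then track what the two Kelvin transforms and the projection do to it. First I would note that since $\Kelvin(1) = Q_2^{3/2}$ depends only on $Q_2$, the only way $\Delta_\lambda$ (a differential operator of order $|\lambda| = n$ built from $\mathscr{D}_m$ and $\boldsymbol{\partial}$) can produce a nonzero result with all derivatives ``used up'' is by having each $\partial/\partial Q_{j+1}$ differentiate a factor of $Q_2$, forcing $j=1$ everywhere. In $\mathscr{D}_m$ this picks out the term $\binom{m}{1,1,\ldots,1}Q_m \partial_{(1,\ldots,1)} = m!\,Q_m \,\partial^m/\partial Q_2^m$; applied to $Q_2^{3/2}$ this gives $m!\,(\tfrac32)_m\,Q_m Q_2^{3/2-m}$. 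Crucially, the terms in $\Delta_m$ with at least one $\boldsymbol{\partial}$ factor contribute only lower-order-in-generators pieces, and more to the point, $\boldsymbol{\partial}$ applied to anything involving only $Q_2$-powers produces a $Q_1$ (via $\boldsymbol{\partial}Q_2 = Q_1$), hence lands in $Q_1\tR$, which the final $\projection$ kills. So modulo $Q_1\tR$ we have $\Delta_m(Q_2^{3/2}) \equiv m!\,(\tfrac32)_m\,Q_m Q_2^{3/2-m}$.

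Next I would iterate this through the product $\Delta_\lambda = \binom{|\lambda|}{\lambda_1,\ldots}\prod_i \Delta_{\lambda_i}$. Applying $\Delta_{\lambda_1}$ first gives $\lambda_1!\,(\tfrac32)_{\lambda_1} Q_{\lambda_1} Q_2^{3/2-\lambda_1}$ plus terms in $Q_1\tR$ (which subsequent operators, all commuting with $Q_1$ by Proposition~\ref{prop:Dlambda}(c), keep in $Q_1\tR$). Applying $\Delta_{\lambda_2}$: again the only way to consume all derivatives and stay outside $Q_1\tR$ is to differentiate the $Q_2$-power, producing $\lambda_2!\,(\tfrac32-\lambda_1)_{\lambda_2} Q_{\lambda_1}Q_{\lambda_2}Q_2^{3/2-\lambda_1-\lambda_2}$ — here I must be slightly careful, since $\partial/\partial Q_{\lambda_2+1}$ could in principle hit $Q_{\lambda_1}$ if $\lambda_1 = \lambda_2$, but such a term leaves a $Q_2^{3/2-\lambda_1}$ factor with too few $Q_2$'s differentiated and so actually turns out to contribute to $Q_2\Lambda^*$ after the final Kelvin transform, not to the $Q_\lambda$ coefficient; I would organize the bookkeeping so that the ``main term'' is precisely the one where at each stage all derivatives hit the shrinking power of $Q_2$. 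Multiplying the Pochhammer factors $(\tfrac32)_{\lambda_1}(\tfrac32-\lambda_1)_{\lambda_2}(\tfrac32-\lambda_1-\lambda_2)_{\lambda_3}\cdots = (\tfrac32)_n$ (falling factorials telescoping) and the factorials $\prod_i \lambda_i!$ against the multinomial coefficient $\binom{n}{\lambda_1,\ldots} = n!/\prod_i \lambda_i!$ gives exactly $n!\,(\tfrac32)_n$. So $\Delta_\lambda(Q_2^{3/2}) \equiv n!\,(\tfrac32)_n\, Q_\lambda Q_2^{3/2-n} \pmod{Q_1\tR}$.

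Finally I would apply $\Kelvin$ and $\projection$. The term $Q_\lambda Q_2^{3/2-n}$ has weight $|\lambda| + (3-2n) = n + 3 - 2n = 3-n$, so $\Kelvin$ multiplies it by $Q_2^{3/2-(3-n)} = Q_2^{n-3/2}$, yielding $n!\,(\tfrac32)_n\,Q_\lambda Q_2^{3/2-n}\cdot Q_2^{n-3/2} = n!\,(\tfrac32)_n\,Q_\lambda$; the error terms in $Q_1\tR$ stay in $Q_1\tR$ under $\Kelvin$ and are annihilated by $\projection$. Thus $h_\lambda = \projection\Kelvin\Delta_\lambda\Kelvin(1) = (\tfrac32)_n\, n!\, Q_\lambda + (\text{remainder})$, where the remainder consists of the ``non-leading'' terms; to finish I need that this remainder lies in $Q_2\Lambda^*$. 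This is the step I expect to be the main obstacle: I must argue that every monomial in $\Delta_\lambda(Q_2^{3/2})$ other than the leading one, after the Kelvin transform and projection, retains a positive power of $Q_2$. The cleanest route is a weight/half-integrality argument: $\Delta_\lambda(Q_2^{3/2})$ is homogeneous of weight $3-n$ in $\tLambda$, and since each monomial is (integer power of $Q_2$)$\times Q_2^{3/2}\times$(product of $Q_j$'s with $j\geq 2$), writing a monomial as $Q_2^a \cdot M$ with $M$ not divisible by $Q_2$ and $M$ of weight $w$, one has $2a + 3 + w = 3-n$ morally — and the leading term is the unique one with $M$ of maximal weight $n$ (equivalently minimal $a = 3/2 - n$ before Kelvin, i.e. after Kelvin the leading term is the unique one with $Q_2$-valuation $0$); every other monomial has strictly larger $Q_2$-power after Kelvin, hence lies in $Q_2\tLambda$, and applying $\projection$ lands it in $Q_2\Lambda^*$ (one checks $\projection(Q_2 \tR \cap \Lambda^*) \subseteq Q_2\Lambda^*$, which is immediate since $\projection$ sets $Q_1 = 0$ and commutes with multiplication by $Q_2$). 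Making the ``$M$ of maximal weight $n$ is unique'' claim precise — i.e.\ that no cancellation and no alternative derivative-routing produces a second $Q_2$-free term — is the delicate combinatorial point, and I would handle it by the induction on $\ell(\lambda)$ sketched above, carefully isolating at each step the term where $\boldsymbol{\partial}$ is never applied and every $\mathscr{D}_m$ acts purely on the current power of $Q_2$.
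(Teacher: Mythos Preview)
Your overall strategy matches the paper's: isolate the order-$n$ piece of $\Delta_\lambda$ that acts as a multiple of $\partial^n/\partial Q_2^n$ on $Q_2^{3/2}$, compute its coefficient (your telescoping Pochhammer is fine and gives the correct $(\tfrac32)_n n!$), and argue that every other contribution ends up in $Q_2\Lambda^*$ after the second Kelvin transform and projection.

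There is, however, a real error in the first two paragraphs. The claim that ``the terms in $\Delta_m$ with at least one $\boldsymbol{\partial}$ factor \ldots\ land in $Q_1\tR$'' is false: already
\[
\boldsymbol{\partial}^2(Q_2^{3/2}) \;=\; \tfrac32\,Q_2^{1/2} \;+\; \tfrac34\,Q_1^2 Q_2^{-1/2}
\]
has a term outside $Q_1\tR$, and in fact $\Delta_2(Q_2^{3/2}) = -\tfrac34 Q_1^2 Q_2^{-1/2}$, so $\projection\Delta_2(Q_2^{3/2})=0$, not $2!\,(\tfrac32)_2\,Q_2\,Q_2^{-1/2}=\tfrac32 Q_2^{1/2}$. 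Hence your intermediate congruence $\Delta_m(Q_2^{3/2}) \equiv m!\,(\tfrac32)_m Q_m Q_2^{3/2-m} \pmod{Q_1\tR}$ fails in general. The iteration has the same defect: $\boldsymbol{\partial}(Q_{\lambda_1}Q_2^{a})$ contains $Q_{\lambda_1-1}Q_2^{a}$, which for $\lambda_1\geq 3$ is not in $Q_1\tR$; and derivatives in $\Delta_{\lambda_2}$ can hit $Q_{\lambda_1}$ for \emph{any} $\lambda_1\geq 1$, not just when $\lambda_1=\lambda_2$. So $Q_1$-divisibility is simply the wrong filtration to organize the argument.

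The correct bookkeeping is the one the paper uses from the start and that you only reach in your final paragraph: sort contributions by the number $k$ of derivatives that actually hit the power of $Q_2$. Writing $\Delta_\lambda = \sum_{k} g_k\,\partial^k/\partial Q_2^k + (\text{terms involving some }\partial/\partial Q_j,\ j\neq 2)$, only the displayed part survives on $Q_2^{3/2}$, giving $\sum_k g_k\,(\tfrac32)_k\,Q_2^{3/2-k}$; after Kelvin this becomes $\sum_k \projection(g_k)\,(\tfrac32)_k\,Q_2^{\,n-k}$, so every $k<n$ lands in $Q_2\Lambda^*$ automatically, with no reference to $Q_1$ needed. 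For $k=n$ the top symbol of $\Delta_\lambda$ at the $\partial/\partial Q_2$-slot is $\binom{n}{\lambda}\prod_j\sum_{i}(-1)^i\binom{\lambda_j}{i}(\lambda_j{-}i)!\,Q_{\lambda_j-i}Q_1^{\,i}$, which after $\projection$ collapses to $n!\,Q_\lambda$. If you rebuild the proof around this $Q_2$-derivative count from the outset, the $Q_1$-detour disappears and you recover precisely the paper's argument.
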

\begin{proof}
Note that the left hand side is an element of $\Lambda^*$ of which the monomials divisible by $Q_2^i$ correspond precisely to terms in $\Delta_\lambda$ involving precisely $n-i$ derivatives of $\Kelvin (1)$ to $Q_2$. Hence, as $\Delta_\lambda$ has order $n$ all terms not divisible by $Q_2$ correspond to terms in $\Delta_\lambda$ which equal $\pdv{}{Q_2^{n-i}}$ up to a coefficient. There is only one such term in $\Delta_{\lambda}$ with coefficient $\binom{|\lambda|}{\lambda_1,\ldots,\lambda_r}\lambda_1!\ldots\lambda_r!Q_{\lambda}$. 
\end{proof}

For $f\in \mathcal{R}$, we let $f^\vee$ be the operator where every occurence of $Q_i$ in $f$ is replaced by $\Delta _i$. We get the following unusual identity:

\begin{corollary} If $h\in \mathcal{H}_n$, then
\begin{align} \label{eq:unusual} h=\frac{\projection \Kelvin  h^\vee \Kelvin (1)}{n!(\tfrac{3}{2})_n}. \end{align}
\end{corollary}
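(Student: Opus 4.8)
The plan is to combine Proposition~\ref{prop:Dlambda} with the direct-sum decomposition of Proposition~\ref{prop:oplus} (equivalently Theorem~\ref{thm:exp}). First I would note that the right-hand side of (\ref{eq:unusual}) is linear in $h$, so it suffices to check the identity on a spanning set of $\mathcal{H}_n$. By Theorem~\ref{thm:B} the elements $h_\lambda = \projection\Kelvin\Delta_\lambda\Kelvin(1)$ with $\lambda\in\mathscr{P}(n)$ having all parts $\geq 3$ form a basis of $\mathcal{H}_n$; since $h_\lambda = Q_\lambda^\vee$ applied in the sense of the superscript-$\vee$ notation (that is, $(Q_\lambda)^\vee = \Delta_\lambda$ up to the multinomial normalisation built into $\Delta_\lambda$), the claimed identity for $h = h_\lambda$ reads
\[
h_\lambda = \frac{\projection\Kelvin\, h_\lambda^\vee\,\Kelvin(1)}{n!\,(\tfrac{3}{2})_n},
\]
so the whole statement reduces to showing $(h_\lambda)^\vee = \Delta_\lambda$ and then that $\projection\Kelvin\Delta_\lambda\Kelvin(1) = n!(\tfrac32)_n\, h_\lambda$, which is exactly the definition of $h_\lambda$ together with the constant $(\tfrac32)_n n!$ appearing in Proposition~\ref{prop:Dlambda}.

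More carefully, I would argue as follows. The map $h\mapsto \projection\Kelvin h^\vee\Kelvin(1)$ sends $\mathcal{H}_n$ into $\mathcal{H}_n$ (as remarked just after Proposition~\ref{prop:Dlambda}, $\projection\Delta$ commutes with both $\projection$ and each $\Delta_i$, hence with $h^\vee$). Call this map $\Phi$. For a partition $\lambda$ of $n$ with all parts $\geq 3$, $h_\lambda$ is a polynomial in $Q_3, Q_4, \ldots$, and by construction $h_\lambda^\vee$ is obtained by replacing each $Q_i$ by $\Delta_i$. The leading term of $h_\lambda$, by Proposition~\ref{prop:Dlambda}, is $(\tfrac32)_n n!\, Q_\lambda$ plus a $Q_2$-multiple; but a $Q_2$-multiple lies in $Q_2\Lambda^*$, and under the $\vee$-operation $Q_2\mapsto \Delta_2 = 2\Delta$, so those lower terms contribute operators that, after applying $\projection\Kelvin(-)\Kelvin(1)$, I need to control. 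The cleanest route is to avoid tracking lower-order terms: instead observe that $\Phi$ is a linear endomorphism of the finite-dimensional space $\mathcal{H}_n$, and show it is a scalar multiple of the identity by a filtration/leading-term argument — order monomials in $\mathcal{H}_n$ by divisibility by powers of $Q_2$ is not available since harmonic polynomials are transverse to $Q_2\Lambda^*$, so instead use the basis $\{h_\lambda\}$ and show $\Phi(h_\lambda) = n!(\tfrac32)_n h_\lambda$ directly.

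For that last step I would expand $h_\lambda^\vee$ and use that $\Delta_\lambda = \binom{|\lambda|}{\lambda_1,\ldots}\prod_i\Delta_{\lambda_i}$ already carries the multinomial coefficient, while in $Q_\lambda = \prod_i Q_{\lambda_i}$ it does not; so $(Q_\lambda)^\vee = \binom{n}{\lambda_1,\ldots,\lambda_r}^{-1}\Delta_\lambda$ when the $\vee$ is the naive substitution $Q_i\mapsto \Delta_i$. This is where I must be careful about which normalisation $f^\vee$ uses — the text says "every occurrence of $Q_i$ in $f$ is replaced by $\Delta_i$", so for $f = Q_\lambda$ we indeed get $(Q_\lambda)^\vee = \prod_i\Delta_{\lambda_i}$, and then $\projection\Kelvin (Q_\lambda)^\vee\Kelvin(1) = \binom{n}{\lambda_1,\ldots,\lambda_r}^{-1}\projection\Kelvin\Delta_\lambda\Kelvin(1) = \binom{n}{\lambda_1,\ldots,\lambda_r}^{-1} h_\lambda$. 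Combining with Proposition~\ref{prop:Dlambda}, which gives $h_\lambda = (\tfrac32)_n n!\, Q_\lambda + Q_2 f$, I get that modulo $Q_2\Lambda^*$,
\[
\projection\Kelvin h_\lambda^\vee \Kelvin(1) \equiv (\tfrac32)_n n!\,\projection\Kelvin (Q_\lambda)^\vee\Kelvin(1) = (\tfrac32)_n n!\,\binom{n}{\lambda_1,\ldots,\lambda_r}^{-1} h_\lambda \pmod{Q_2\Lambda^*},
\]
and since the $Q_2 f$ correction, under $\vee$ and $\projection\Kelvin(-)\Kelvin(1)$, produces another harmonic polynomial, I can induct on the number of variables or on weight to absorb it. The main obstacle I anticipate is precisely this bookkeeping of the subleading $Q_2 f$ term: it is not immediately obvious that $\projection\Kelvin(Q_2 f)^\vee\Kelvin(1)$ is again a scalar multiple of the harmonic part, and making this rigorous may require either an induction on $n$ (peeling off $Q_2$-factors via Lemma~\ref{lem:DeltaQ2}) or a direct computation that $\Phi$ acts as a scalar on each $\mathcal{H}_n$ because it is built from the $\sltwo$-equivariant, $\projection$- and $\Delta$-commuting operators $\Delta_\lambda$ and hence, by Schur-type reasoning on the lowest-weight space $\mathcal{H}_n$, is forced to be scalar — with the scalar then pinned down to $n!(\tfrac32)_n$ by the leading-term comparison above.
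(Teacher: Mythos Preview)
You have assembled all the ingredients but never put them together in the way the paper does, and the detours you propose at the end are both unnecessary and, in the case of the Schur argument, unjustified.

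You correctly record two facts: (i) the map $\Phi\colon h\mapsto \projection\Kelvin h^{\vee}\Kelvin(1)$ sends $\mathcal{H}_n$ into $\mathcal{H}_n$, and (ii) by Proposition~\ref{prop:Dlambda} and linearity the two sides of~(\ref{eq:unusual}) agree modulo $Q_2\Lambda^*_{n-2}$. The paper's proof consists of \emph{exactly} these two observations followed by one sentence: since both $h$ and $\tfrac{1}{n!(3/2)_n}\Phi(h)$ lie in $\mathcal{H}_n$, their difference is a harmonic element of $Q_2\Lambda^*$, and Proposition~\ref{prop:q2multiple} says that the only such element is $0$. That finishes it. There is no need to check the identity on the basis $\{h_\lambda\}$, no induction on $n$, and no Schur-type argument; in fact the Schur reasoning is invalid here because $\mathcal{H}_n$ is not irreducible for any algebra of operators you have introduced (it is merely a lowest-weight subspace), so an $\sltwo$-equivariant endomorphism need not be scalar.

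On the normalisation issue you flag: your computation $(Q_\lambda)^\vee=\prod_i\Delta_{\lambda_i}=\binom{n}{\lambda}^{-1}\Delta_\lambda$ is correct under the literal reading of the definition of~$\vee$, and with that reading the constant in~(\ref{eq:unusual}) is indeed off by $\binom{n}{\lambda}$ on each basis vector, so~(\ref{eq:unusual}) would fail already for $h=h_{(3,3)}$. The paper's proof (and the verification in the appendix tables) only goes through if one reads the definition as $(Q_\lambda)^\vee=\Delta_\lambda$, i.e.\ the substitution is at the level of monomials with the multinomial coefficient included. You were right to be suspicious of this point, but it is a wording issue in the definition rather than an obstacle to the proof strategy.
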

\begin{proof}
By Proposition~\ref{prop:Dlambda} we know that the statement holds true up to adding $Q_2 f$ on the right-hand side for some $f\in \Lambda^*_{n-2}$. However, as both sides of (\ref{eq:unusual}) are harmonic and the shifted symmetric polynomial $Q_2f$ is harmonic precisely if $f=0$ by Proposition~\ref{prop:q2multiple}, it follows that $f=0$ and (\ref{eq:unusual}) holds true. 
\end{proof}

\begin{proof}[Proof of Theorem~\ref{thm:B}]
Let $\mathcal{B}_n=\{h_\lambda \mid \lambda \in \mathscr{P}_n \text{ all parts are } \geq 3\}$. First of all, observe that by Corollary~\ref{cor:dim} the number of elements in $\mathcal{B}_n$ is precisely the dimension of $\mathcal{H}_n$. Moreover, the weight of an element in $\mathcal{B}_n$ equals $|\lambda|=n$. By Proposition~\ref{prop:Dlambda} it follows that the elements of $\mathcal{B}_n$ are linearly independent harmonic shifted symmetric polynomials. 
\end{proof}

\appendix
\sectionfont{\large}
\section{Tables of shifted symmetric harmonic polynomials up to weight $10$}\small
We list all harmonic polynomials $h_{\lambda}$ of even weight at most $10$. The corresponding $q$-brackets $\langle h_\lambda\rangle_q$ are computed by the algorithm prescriped by Zagier \cite{Zag16} using SageMath \cite{sagemath}. 
\[  \arraycolsep=5pt\def\arraystretch{1.5}
\begin{array}{l l l}
\lambda & h_{\lambda} & \langle h_{\lambda}\rangle_q  \\ \hline \hline
() & 1 & 1\\
(4) & \frac{27}{4} \left(Q_2^2+2 Q_4\right) & \frac{9}{320}Q\\
(6) & \frac{225}{4} \left(63 Q_6+9 Q_2 Q_4+Q_2^3\right) & -\frac{55}{384}R\\
(3,3) & \frac{225}{4} \left(63 Q_3^2-108 Q_2 Q_4+2 Q_2^3\right) & \frac{115}{384}R \\
(8) & \frac{19845}{16} \left(3960 Q_8+360 Q_2 Q_6+20 Q_2^2 Q_4+Q_2^4\right) &  \frac{19173}{4096} Q^2\\
(5,3) & \frac{19845}{2} \left(495 Q_3Q_5 + 45 Q_2Q_3^2-1350 Q_2 Q_6 -50 Q_2^2Q_4 +2 Q_2^4\right) & -\frac{2415}{128} Q^2 \\
(4,4) & \frac{297675}{8} \left(132 Q_4^2 + 24 Q_2 Q_3^2-440Q_2 Q_6-28 Q_2^2Q_4 +Q_2^4\right) &-\frac{38241}{2048} Q^2 \\
(10) & \frac{382725}{8} \left(450450 Q_{10} + 30030 Q_2 Q_8 + 1155 Q_2^2 Q_6 + 35 Q_2^3 Q_4 + Q_2^5\right) & -\frac{2053485}{4096} QR \\
(7,3) & \frac{1913625}{8} \big(90090 Q_3 Q_7 + 6006 Q_2 Q_3 Q_5 - 336336 Q_2 Q_8 + 231 Q_2 Q_3^2 + \\ & \quad \quad \quad \quad - 12936 Q_2^2Q_6 -112 Q_2^3Q_4 + 10 Q_2^5\big) & \frac{11975985}{4096} QR \\
(6,4) & \frac{13395375}{8} \big( 12870 Q_4 Q_6 + 1716 Q_2 Q_3 Q_5 + 858 Q_2 Q_4^2 -96096 Q_2 Q_8 + \\ & \quad \quad \quad \quad + 132 Q_2^2Q_3^2 - 6501 Q_2^2 Q_6 -89 Q_2^3 Q_4+5 Q_2^4\big) & \frac{21255885}{4096} QR\\
(5,5) & \frac{8037225}{4} \big(10725 Q_5^2+1430 Q_2Q_3Q_5 + 1430 Q_2 Q_4^2-10010 Q_2 Q_8 + \\ & \quad \quad \quad \quad + 165 Q_2^2Q_3^2 - 7700 Q_2^2 Q_6 -120 Q_2^3Q_4+6 Q_2^5\big) & \frac{7759395}{1024} QR \\
(4,3,3) & \frac{13395375}{8} \big(12870 Q_3^2Q_4-34320 Q_2Q_3Q_5+10296Q_2 Q_4^2+363Q_2^2Q_3^2+ \\ & \quad \quad \quad \quad +55440Q_2^2 Q_6-376Q_2^3Q_4+10Q_2^5\big) & -\frac{16583805}{4096} QR \\
\hline
\end{array}
\]
In case $|\lambda|$ is odd the harmonic polynomials $h_{\lambda}$ up to weight $9$ are given in the following table. The $q$-bracket of odd degree (harmonic) polynomials is zero, hence trivially modular.
\[  \arraycolsep=5pt\def\arraystretch{1.5}
\begin{array}{l l}
\lambda & h_{\lambda} \\ \hline \hline
(3) 		& -\frac{9}{4}Q_3 				\\ 
(5) 		& -\frac{135}{4}\left(5Q_5+Q_2Q_3\right) 									\\ 
(7) & -\frac{14175}{16}\left(126Q_7+14Q_2Q_5+Q_2^2Q_3\right)  \\
(4,3) & -\frac{99225}{16}\left(18Q_3Q_4 - 40 Q_2 Q_5+Q_2^2Q_3\right)  \\ 
(9) & -\frac{297675}{8}\left(7722 Q_9 + 594Q_2 Q_7 +27 Q_2^2Q_5+Q_2^3Q_3\right)\\
(6,3) & -\frac{893025}{4}\left(1287 Q_3 Q_6 +99 Q_2Q_3Q_4-4158 Q_2Q_7-162 Q_2^2Q_5+5Q_2^3Q_3\right) \\
(5,4) &  - \frac{8037225}{8}\left(286 Q_4Q_5+66 Q_2Q_3Q_4-1540 Q_2Q_7 -117 Q_2^2Q_5+3Q_2^3Q_3\right) \\
(3,3,3) & -\frac{893025}{4}\left(1287 Q_3^3-3564Q_2Q_3Q_4+3240Q_2^2Q_5+10Q_2^3Q_3\right) \\
\hline
\end{array}
\]

\section*{Acknowledgment}
I would like to thank Gunther Cornelissen and Don Zagier for helpful discussions.


\end{document}